\theoremstyle{plain}
\newtheorem{theorem}{Theorem}[section]
\newtheorem{lemma}[theorem]{Lemma}
\newtheorem{proposition}[theorem]{Proposition}
\newtheorem{corollary}[theorem]{Corollary}
\theoremstyle{definition}
\renewenvironment{proof}[1][Proof.]{\begin{trivlist}
		\item[\hskip \labelsep {\bfseries #1}]}{\qed \end{trivlist}}
\newcommand{\gap}{\vspace{5.5pt}}
\newcommand{\R}{\mathcal{R}}
\newcommand{\Rn}{\mathcal{R}^n}
\newcommand{\Rnp}{\mathcal{R}_+^n}
\newcommand{\Rnpdown}{(\mathcal{R}_+^n)^{\downarrow}}
\newcommand{\Sn}{\mathcal{S}^n}
\newcommand{\Hn}{\mathcal{H}^n}
\newcommand{\V}{\mathcal{V}}
\newcommand{\mn}{{\cal M}_n}
\newcommand{\wtt}{\widetilde{T}}
\newcommand{\veps}{\varepsilon}
\newcommand{\E}{{\cal E}}
\newcommand{\I}{{\cal I}}
\newcommand{\Ik}{{\cal I}^{(k)}}
\newcommand{\ce}{\I\circ \E}
\newcommand{\one}{{\bf 1}}
\newcommand{\wsup}{\mbox{w-sup\,}}
\newcommand{\winf}{\mbox{w-inf\,}}
\newcommand{\diag}{\operatorname{diag}}
\newcommand{\tr}{\operatorname{tr}}
\newcommand{\abs}[1]{\left\vert #1 \right\vert}
\newcommand{\ip}[2]{\left< #1,\, #2 \right>}
\newcommand{\wprec}{\underset{w}{\prec}}
\title{\bf A pointwise weak-majorization inequality\\ for linear  maps over \\ 
Euclidean Jordan algebras}
\author{M. Seetharama Gowda\\Department of Mathematics and Statistics\\University of Maryland Baltimore County\\Baltimore, Maryland 21250, USA\\and\\
J. Jeong\\
        Applied Algebra and Optimization Research Center \\
        Sungkyunkwan University \\
        2066 Seobu-ro, Suwon 16419, Republic of Korea\\
         jjycjn@skku.edu }
\date{\today }
\begin{document}

\maketitle

\begin{abstract}
Given a linear map $T$ on a Euclidean Jordan algebra of rank $n$, we consider the set of all  nonnegative vectors $q$ in $\Rn$ with decreasing components 
that satisfy the     pointwise weak-majorization inequality 
$\lambda(|T(x)|)\underset{w}{\prec}q*\lambda(|x|)$, 
where $\lambda$ is the eigenvalue map and $*$ denotes the componentwise product in $\Rn$. With respect to the weak-majorization ordering, we show the existence of the  least 
 vector in this set.
 When $T$ is a positive map, the least vector is shown to be the  join (in the weak-majorization order) 
 of eigenvalue vectors of $T(e)$ and $T^*(e)$, where $e$ is the unit element of the algebra.  
These results are analogous to the results  of 
Bapat \cite{bapat1991}, proved in the setting of
the space of all $n\times n$ complex matrices with singular value map in place of the eigenvalue map.  
They also extend two recent results of Tao, Jeong, and Gowda \cite{tao-jeong-gowda} proved for quadratic representations and Schur product induced transformations. As an application, we provide an estimate on the norm of a general 
linear map relative to spectral norms.
\end{abstract}

\vspace{1cm}
\noindent{\bf Key Words:}
Euclidean Jordan algebra, eigenvalue  map, weak-majorization ordering, positive map, spectral norm 
\\

\noindent{\bf AMS Subject Classification:} 15A42, 17C20.

\section{Introduction}
This paper deals with a pointwise weak-majorization inequality for an arbitrary linear map on a Euclidean Jordan algebra.  Our motivation comes from several sources. 
In a 1991 paper, Bapat \cite{bapat1991} proves the following result for a linear map $T$ on the space $\mn$ of all $n\times n$ complex matrices: {\it  
There is a unique nonnegative vector $\eta(T)$ with decreasing components such that 
$$s(T(X))\underset{w}{\prec} \eta(T)*s(X)\,\,\mbox{for all}\,\,X\in \mn,$$
with the additional property that if the above inequality holds for some $q$ in place of $\eta(T)$, then $\eta(T)\underset{w}{\prec} q$.} Here, $s(X)$ 
denotes the vector of singular values of $X$ in $\mn$ written in the decreasing order,
$*$ denotes the componentwise
product in $\Rn$, and $\underset{w}{\prec}$ is the weak-majorization preordering relation  on  $\Rn$. Specializing the above result, Bapat proves that if {\it $T$ is 
positive} (meaning that it takes  positive semidefinite matrices to positive semidefinite matrices), then, 
{\it $\eta(T)$ is the  join  of the singular value vectors of $T(I)$ and $T^*(I)$ relative to the weak-majorization preordering}, where $I$ denotes the identity matrix.
\\

Subsequently, in a 1999 paper, Niezgoda \cite{niezgoda} studied a generalization
 in the setting of group majorization (Eaton triples) and described  equivalent formulations 
for positive linear maps, see Theorem 3.1 and  Examples 4.1 and 4.2 in \cite{niezgoda}.\\

Going in a different direction, in a recent paper, Tao, Jeong, and Gowda \cite{tao-jeong-gowda} proved, in the setting of Euclidean Jordan algebras, three weak-majorization inequalities. To elaborate, 
let $(\V, \circ, \langle\cdot,\cdot\rangle)$ denote a  Euclidean Jordan algebra  of rank $n$ with unit $e$ and carrying the trace inner product.
For $a\in \V$, consider the Lyapunov transformation $L_a$ and the quadratic representation $P_a$ defined, respectively, by
$$L_a(x):=a\circ x\quad\mbox{and}\quad P_a(x):=2a\circ (a\circ x)-a^2\circ x\quad (x\in \V).$$
Also, for any $n\times n$ {\it real symmetric  positive semidefinite matrix} $A$ and a fixed Jordan frame in $\V$, consider  the corresponding Schur product induced 
linear transformation $D_A:\V\rightarrow \V$ defined by
$D_A(x)=A\bullet x$. For any $x\in \V$, let $\lambda(x)$ denote the vector of eigenvalues of $x$ written in the decreasing order. In \cite{tao-jeong-gowda}, 
the following results were proved.

\begin{itemize}
\item [$\bullet$]
$\lambda(|L_a(x)|)\underset{w}{\prec} \lambda(|a|)*\lambda(|x|)\,\,\mbox{for all}\,\,x\in \V.$
\item [$\bullet$]
$\lambda(|P_a(x)|)\underset{w}{\prec} \lambda(a^2)*\lambda(|x|) \,\,\mbox{for all}\,\,x\in \V.$
\item [$\bullet$]
$\lambda(|D_A(x)|)\underset{w}{\prec} \lambda(|\mbox{diag}A|)*\lambda(|x|)\,\,\mbox{for all}\,\, x\in \V.$
\end{itemize}
 In the above, the transformations $L_a$, $P_a$, and $D_A$ are all self-adjoint and the last two are {\it positive,} that is, they keep the symmetric cone $\V_+$ of $\V$ invariant. 
Writing $T$ for any one of the these transformations, above statements could be written in a unified way:
$$\lambda(|T(x)|)\underset{w}{\prec} \lambda(|T(e)|)*\lambda(|x|) \,\,\mbox{for all}\,\,x\in \V.$$
Motivated by  the strong similarity between the results of Bapat and of Tao et al., we raise the question whether Bapat's results have   analogs in the 
setting of Euclidean Jordan algebras.
While there is a close connection between the $C^*$-algebra $\mn$ and the Euclidean Jordan algebra $\Hn$ of all $n\times n$  complex Hermitian matrices, 
lack of matrix type multiplication and associative properties in a general Euclidean Jordan algebra hinder routine/obvious generalizations of results and proofs. 
Yet, with powerful and elegant Euclidean Jordan algebra machinery we show the following results:  
{\it Given any linear map $T:\V\rightarrow \V$, there exists  a unique nonnegative vector $\eta(T)$ in $\Rn$ with decreasing components such that 
\begin{equation}\label{main inequality}
\lambda(|T(x)|)\underset{w}{\prec} \eta(T)*\lambda(|x|)\,\, \mbox{for all}\,\,x\in \V,
\end{equation} 
with the additional property that if the above inequality holds with $q$ in place of $\eta(T)$, then $\eta(T)\underset{w}{\prec} q$.
Furthermore, if $T$ is positive, then 
$\eta(T)$ is the  join of eigenvalue vectors of $T(e)$ and $T^*(e)$ in the weak-majorization preordering on $\Rn$.
In particular, when $T$ is positive and  self-adjoint, we have 
$\lambda(|T(x)|)\underset{w}{\prec} \lambda(T(e))*\lambda(|x|)$ for all $x\in \V.$}
We note that this last statement recovers the two results of Tao et al., stated for $P_a$ and $D_A$ (proved in \cite{tao-jeong-gowda} by different techniques). 
\\

Now, constructing some $q$ that satisfies the pointwise inequality $\lambda(|T(x)|)\underset{w}{\prec} q*\lambda(|x|)$ is easy: One can take a  large positive multiple of the vector of ones in $\Rn$.
Demonstrating the existence of  `least' $q$ (which implies uniqueness) and describing this $q$ for a positive map requires more and nontrivial work.  
In our analysis, three key results from Euclidean Jordan algebras are used. The first one is the {\it `Fan-Theobald-von Neumann inequality'} \cite{lim et al, baes, gowda-tao}:
\begin{equation}\label{ftvn}
\langle x,y\rangle \leq \langle \lambda(x),\lambda(y)\rangle \quad (x,y\in \V).
\end{equation}
The second one is the {\it `variational principle'} \cite{baes}:
\begin{equation}\label{variational principle}
S_k(x):=\lambda_1(x)+ \lambda_2(x) + \cdots + \lambda_k(x) = \underset{c\,\in\, \Ik}{\max} \langle x, c\rangle,
\end{equation}
where $\Ik$ denotes the set of all idempotents of rank $k$ in $\V$.
The third key result  is  a weak-majorization inequality (\cite{tao-jeong-gowda}, Lemma 4.2): If $\varepsilon\in \V$ with $\varepsilon^2=e$ and $x\in \V$, then
\begin{equation}\label{Lemma 4.1}
\lambda(|x\circ \veps|)\wprec \lambda(|x|)*\lambda(|\veps|).
\end{equation}

Now, observing the commonality of results in both $\mn$ and Euclidean Jordan algebras, one might wonder if there is a general framework/setting where a unified result could be obtained. 
We mention Eaton triples \cite{eaton-perlman} (equivalently, normal decomposition systems \cite{lewis} which includes both $\mn$ and simple Euclidean Jordan algebras) 
or more generally, Fan-Theobald-von Neumann systems \cite{gowda-ftvn} (which include Eaton triples and all  Euclidean Jordan algebras) as 
 possible candidates for such a unified approach. The work of Niezgoda \cite{niezgoda} mentioned earlier may be taken as a starting point in this regard.

Here is an outline of the paper. In Section 2, we cover some preliminaries. Section 3 deals with our main result (\ref{main inequality}) for linear maps and a consequence for positive ones.
It also includes a result on majorization: {\it There exists $q\in \Rn$  with decreasing components such that $\lambda(T(x))\prec q*\lambda(x)$ for all $x\in \V$ if and only if
$T$ is a scalar multiple of a doubly stochastic map.}
In Section 4, we describe some properties of the nonlinear map $\eta$ defined on the set of all linear maps on $\V$. As an application, we provide an estimate on the norm of a linear map relative to spectral norms. Examples are provided in Section 5 and a few open problems are mentioned in Section 6.
 
\section{Preliminaries}
Throughout this paper, we let $\Rn$ denote the real Euclidean $n$-space with the usual inner product and  $\Rnp$ denote the set of all nonnegative vectors in $\Rn$. We say that a vector 
$q=(q_1,q_2,\ldots, q_n)$ in $\Rn$ has decreasing components or that its  components are written in the decreasing order if $q_1\geq q_2\geq \cdots\geq q_n$. We let 
$$(\Rnp)^\downarrow=\big \{q=(q_1,q_2,\ldots, q_n)\in \Rnp: q_1\geq q_2\geq \cdots\geq q_n\big \}.$$ For $p,q\in \Rn$, we write $p\geq q$ if $p-q\in \Rnp$ and let $p*q$ denote 
their componentwise product.
We write $\one_k$ for  the vector in $\Rn$ with $1$s in the first $k$ slots, and $0$s elsewhere; $\one$ denotes the vector of ones in $\Rn$. 
Given $p=(p_1,p_2,\ldots, p_n)\in \Rn$, we write 
$|p|:=(|p_1|,|p_2|,\ldots, |p_n|)$ for the vector of absolute values
and  $p^\downarrow:=(p^\downarrow_1,p^\downarrow_2,\ldots, p^\downarrow_n)$ for the decreasing rearrangement of $p$; the latter is the vector obtained by 
rearranging the entries of $p$ in a decreasing order. We note the {\it Hardy-Littlewood-P\'{o}lya rearrangement inequality} $\langle p,q\rangle \leq \langle p^\downarrow,q^\downarrow\rangle$ and, as a consequence,
\begin{equation}\label{hlp}
\langle p,q\rangle \leq \langle |p|,|q|\rangle \leq \langle |p|^\downarrow,|q|^\downarrow\rangle\quad (p,q\in \Rn).
\end{equation}

Given two vectors $p$ and $q$ in $\Rn$, we say that $p$ is {\it weakly majorized}
 by $q$ and write $p\wprec q$ if $\sum_{i=1}^{k}p_i^\downarrow \leq \sum_{i=1}^{k}q_i^\downarrow$ for all indices $k$, $1\leq k\leq n$. If, in addition, 
$\sum_{i=1}^{n}p_i^\downarrow = \sum_{i=1}^{n}q_i^\downarrow$, we say that $p$ is {\it majorized} by $q$ and write $p\prec q$.   
For any $p\in \Rn$ and index $k\in \{1,2,\ldots, n\}$, $S_k(p)$ denotes the sum of $k$ largest components of $p$, that is,
$S_k(p):= \sum_{i=1}^{k}p_i^\downarrow .$ 
We will use the following result (\cite{bhatia}, Problem II.5.16) in $\Rn$:\\
\begin{equation}\label{bhatia problem}
\big [\,r\geq 0\,\, \mbox{and} \,\,  p\underset{w}{\prec} q\,\big ]\Rightarrow  r^\downarrow *p^\downarrow \underset{w}{\prec} r^\downarrow*q^\downarrow\Rightarrow \langle r^\downarrow, p^\downarrow\rangle \leq \langle r^\downarrow, q^\downarrow\rangle.
\end{equation}
\\
Consider the ordering relation on $\Rn$ induced by weak-majorization. While this relation is merely reflexive and transitive on $\Rn$, it becomes antisymmetric on 
$\Rnpdown$; thus, it is a  {\it partial order } 
on $\Rnpdown$. In this regard, the 
 following result of Bapat is useful.

\begin{proposition}\label{bapat1991} (Bapat \cite{bapat1991}, Lemma 3 and Corollary 4)
{\it \begin{itemize}
\item [(a)] Let $Q$ be a nonempty subset of $\Rnp$. Then, there is a unique $q^*\in \Rnpdown$ such that $q^*\wprec q$ for all $q\in Q$ and if $p\in \Rnp$ with $p\wprec q$ for all $q\in Q$, then $p\wprec q^*$. We write $\winf(Q):=q^*$.
\item [(b)] Suppose $S$ is a nonempty bounded subset of $\Rnp$. Then there is a unique $p^*\in \Rnpdown$ such that $s\wprec p^*$ for all $s\in S$ and if $p\in \Rnp$ with $s\wprec p$ for all $s\in S$, then $p^*\wprec p$. We write $\wsup(S):=p^*$. When $S=\{r,s\}$, the `join' of $r$ and $s$ is defined/denoted by $r\underset{w}{\vee} s:=\wsup(S)$.
\end{itemize}
}
\end{proposition}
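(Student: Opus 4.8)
\textit{Proof idea.} The plan is to transfer the entire question into the language of partial-sum sequences and to exploit one soft fact about a certain polyhedral cone. Writing $s_0:=0$, a vector $q\in\Rn$ lies in $\Rnpdown$ exactly when the partial-sum sequence $\big(S_1(q),\dots,S_n(q)\big)$ has successive increments $\delta_k:=S_k(q)-S_{k-1}(q)$ obeying $\delta_1\ge\delta_2\ge\cdots\ge\delta_n\ge 0$, because $\delta_k=q_k^{\downarrow}=q_k$. So I would introduce the polyhedral cone
\[
\mathcal{C}:=\big\{\,s\in\Rn\;:\;\text{with }s_0:=0,\ \ s_1-s_0\ge s_2-s_1\ge\cdots\ge s_n-s_{n-1}\ge 0\,\big\},
\]
and observe that the partial-sum map $\Phi(q):=\big(S_1(q),\dots,S_n(q)\big)$ is a linear bijection of $\Rnpdown$ onto $\mathcal{C}$, its inverse being the differencing map $s\mapsto(s_1,\,s_2-s_1,\,\dots,\,s_n-s_{n-1})$. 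Since, by definition, $p\wprec q$ means $S_k(p)\le S_k(q)$ for all $k$, and since $S_k(p)=S_k(p^{\downarrow})$, the map $\Phi$ carries the weak-majorization order on $\Rnpdown$ exactly to the componentwise order on $\mathcal{C}$.

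The one real ingredient I would establish is that $\mathcal{C}$ is closed under componentwise infima of arbitrary nonempty families: if $\{s^{(\alpha)}\}_{\alpha}$ is a nonempty family in $\mathcal{C}$ and $t_k:=\inf_{\alpha}s^{(\alpha)}_k$, then $t=(t_1,\dots,t_n)\in\mathcal{C}$. Finiteness of each $t_k$ is automatic from $s^{(\alpha)}\ge 0$. Each defining inequality of $\mathcal{C}$ can be written as $s_n\ge s_{n-1}$ or $2s_k\ge s_{k-1}+s_{k+1}$ (for $1\le k\le n-1$, with $s_0:=0$); using $s^{(\alpha)}_j\ge t_j$ for every $\alpha$ and $j$, one gets $2s^{(\alpha)}_k\ge s^{(\alpha)}_{k-1}+s^{(\alpha)}_{k+1}\ge t_{k-1}+t_{k+1}$ and $s^{(\alpha)}_n\ge s^{(\alpha)}_{n-1}\ge t_{n-1}$ for every $\alpha$, and passing to the infimum over $\alpha$ on the left gives $2t_k\ge t_{k-1}+t_{k+1}$ and $t_n\ge t_{n-1}$; hence $t\in\mathcal{C}$. (This is just the elementary fact that a pointwise infimum of concave sequences is concave.)

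Granting this, part (a) is immediate: given a nonempty $Q\subseteq\Rnp$, set $t_k:=\inf_{q\in Q}S_k(q)$; since $S_k(q)=\Phi(q^{\downarrow})_k$ with $\Phi(q^{\downarrow})\in\mathcal{C}$, the lemma gives $t\in\mathcal{C}$, so $q^{*}:=\Phi^{-1}(t)\in\Rnpdown$ is well defined, and $S_k(q^{*})=t_k\le S_k(q)$ for all $q$ gives $q^{*}\wprec q$; moreover any $p\in\Rnp$ with $p\wprec q$ for all $q\in Q$ has $S_k(p)\le t_k=S_k(q^{*})$, so $p\wprec q^{*}$. For part (b), with $S\subseteq\Rnp$ nonempty and bounded, set $u_k:=\sup_{s\in S}S_k(s)$, which is finite by boundedness, and let $A:=\{v\in\mathcal{C}:v\ge u\}$. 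This set is nonempty, since the arithmetic progression $(u_n,2u_n,\dots,nu_n)$ lies in $\mathcal{C}$ and dominates $u$ (because $u$ is nondecreasing). By the lemma, $v^{*}:=\inf A$ lies in $\mathcal{C}$, and clearly $v^{*}\ge u$, so $v^{*}$ is the least element of $A$; then $p^{*}:=\Phi^{-1}(v^{*})$ satisfies $S_k(s)\le u_k\le S_k(p^{*})$ for every $s\in S$, i.e. $s\wprec p^{*}$, while any $p\in\Rnp$ with $s\wprec p$ for all $s$ has $S_k(p)\ge u_k$, hence $\Phi(p^{\downarrow})\in A$, so $\Phi(p^{\downarrow})\ge v^{*}$ and thus $p^{*}\wprec p$. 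In both parts uniqueness is forced by the antisymmetry of $\wprec$ on $\Rnpdown$: if two candidates both satisfy the displayed extremality property, then feeding each into the other's property shows they weakly majorize one another, hence coincide.

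The step I expect to be the genuine obstacle is conceptual rather than computational. One must \emph{not} try to produce $\wsup(S)$ as the componentwise supremum of the partial-sum vectors $\{\Phi(s):s\in S\}$: that sequence need not be concave, so $\mathcal{C}$ is not closed under componentwise suprema and $\Rnpdown$ is not a lattice in the naive sense. The correct construction is the componentwise infimum of the family of \emph{valid} upper bounds, which lands back in $\mathcal{C}$ precisely because of the closure-under-infima lemma; dually, for $\winf(Q)$ one uses that $k\mapsto\inf_{q\in Q}S_k(q)$ is already concave. The only further points worth care are the finiteness assertions — one genuinely needs $Q\subseteq\Rnp$ in (a) and the boundedness of $S$ in (b) to keep the relevant infima and suprema finite — and the harmless replacement of $q$ by $q^{\downarrow}$ throughout, legitimate since $S_k$ depends only on the decreasing rearrangement.
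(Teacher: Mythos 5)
Your proposal is correct and follows essentially the route the paper (quoting Bapat) indicates: for (a) you take $\beta_k=\inf_{q\in Q}S_k(q)$ and difference, and for (b) you take the $\winf$ of the set of weak-majorization upper bounds of $S$, which is exactly the construction stated after the proposition; your partial-sum/concave-sequence reformulation simply supplies the verification (pointwise infima of concave, nondecreasing partial-sum sequences remain of that type) that the paper delegates to the citation. No gaps noted.
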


In Item $(a)$ above, $q^*$ is constructed as follows. 
For $1\leq k\leq n$, let
$$\beta_k:=\underset{q\in Q}{\inf}S_k(q)$$ and
$$r_k:=\beta_k-\beta_{k-1},$$  where $\beta_0:=0$. Then $q^*:=(r_1,r_2,\ldots, r_n)$. Also, in Item $(b)$, for a nonempty bounded subset $S$ of $\Rnp$,
one defines $Q:=\{q: s\wprec q\,\,\mbox{for all}\,\,s\in S\}$ and $p^*:=\winf(Q).$ 
\\Note that when $r,s\in \Rnpdown$,
$$r\underset{w}{\vee} s\wprec \max\{r,s\},$$
where $\max\{r,s\}$ is the componentwise maximum of $r$ and $s$. 
\\

Throughout, we let  $(\V, \circ,\langle\cdot,\cdot\rangle)$ denote a Euclidean Jordan algebra of rank
$n$ with unit element $e$ \cite{faraut-koranyi, gowda-sznajder-tao};
the Jordan product and inner product of  elements $x$ and $y$ in $\V$ are, respectively, denoted by   $x\circ y$  and $\langle x,y\rangle$. We note (one of the defining properties of a Euclidean Jordan algebra): 
$$\langle x\circ y,z\rangle=\langle x,y\circ z\rangle\\,\, \mbox{for all}\,\,x,y,z\in \V.$$

It is well known \cite{faraut-koranyi} that any Euclidean Jordan algebra is a direct product/sum
of simple Euclidean Jordan algebras and every simple Euclidean Jordan algebra is isomorphic to one of five algebras,
three of which are the algebras of $n\times n$ real/complex/quaternion Hermitian matrices. The other two are: the algebra of $3\times 3$ octonion Hermitian matrices and the Jordan spin algebra.

According to the {\it spectral decomposition
theorem} \cite{faraut-koranyi}, every element $x\in \V$ has a decomposition
$x=x_1e_1+x_2e_2+\cdots+x_ne_n,$
where the real numbers $x_1,x_2,\ldots, x_n$ are (called) the eigenvalues of $x$ and
$\{e_1,e_2,\ldots, e_n\}$ is a Jordan frame in $\V$. (An element may have decompositions coming from different Jordan frames, but the eigenvalues remain the same.) 
The {\it trace} of $x$ is defined by
$\tr(x):=x_1+x_2+\cdots+x_n.$
It is  known that $(x,y)\mapsto \tr(x\circ y)$ defines another inner product on $\V$ that is compatible with the given Jordan product.
{\it Throughout this paper, we assume that the inner product on $\V$
is this trace inner product, that is,
$\langle x,y\rangle=\tr(x\circ y).$}\\

 The {\it rank} of an element $x$ is the number of nonzero eigenvalues of $x$.
We use the notation $x\geq 0$ ($x> 0$) when all the eigenvalues of $x$ are nonnegative (respectively, positive) and $x\geq y$ (or, $y\leq x$) when $x-y\geq 0$, etc. We let
$$\V_+:=\{x\in \V: x\geq 0\}$$
denote the 
{\it symmetric cone} of $\V$. It is known that $\V_+$ is a self-dual (closed convex) cone.\\
Given a function $\phi:\R\rightarrow \R$, the corresponding {\it L\"{o}wner map} (still denoted by $\phi$) is defined via spectral decomposition: 
\begin{center}
If $x=\sum_{i=1}^{n}x_ie_i$, then  $\phi(x):=\sum_{i=1}^{n}\phi(x_i)e_i.$
\end{center}
In particular, for  $x=\sum_{i=1}^{n}x_ie_i$, we define $|x|:=\sum_{i=1}^{n}|x_i|e_i$ and $x^+:=\sum_{i=1}^{n}x^{+}_ie_i$.
By writing $|x_i|=x_i\,\veps_i$, where $\veps_i=1$ when $x_i\geq 0$ and $\veps_i=-1$ when $x_i<0$, we see that 
 $|x|=x\circ \veps$, where
$\veps^2=e$ (in fact, $\veps:=\veps_1e_1+\veps_2e_2+\cdots+\veps_ne_n$).\\

For any $x\in \V$, $\lambda(x)$ -- called the {\it eigenvalue vector} of $x$ -- is the vector of eigenvalues of $x$ written in the decreasing order. We write
$$\lambda(x)=\Big ( \lambda_1(x),\lambda_2(x),\ldots, \lambda_n(x)\Big )$$
and note $\lambda_1(x)\geq\lambda_2(x)\geq \cdots\geq  \lambda_n(x)$. 
It is known that $\lambda:\V\rightarrow \Rn$ is  continuous \cite{baes}. \\
{\it We define weak-majorization and majorization in $\V$ by: $x\wprec y$ in $\V$ if and only if 
$\lambda(x)\wprec \lambda(y)$ in $\Rn$ and $x\prec y$ in $\V$ if and only if $\lambda(x)\prec \lambda(y)$ in $\Rn$.}
The following implication is a consequence of the well-known Hirzebruch's max-min theorem \cite{gowda-tao}:
$$x\leq y\Rightarrow \lambda(x)\leq \lambda(y).$$
In particular, we have $\lambda(x)\leq \lambda(|x|)$ for all $x\in \V$.\\

As $|\lambda(x)|^\downarrow=\lambda(|x|)$, combining (\ref{ftvn}) and (\ref{hlp}), we get the following useful inequality:
\begin{equation}\label{ftvn-hlp}
\langle x,y\rangle \leq \langle \lambda(|x|),\lambda(|y|)\rangle\quad (x,y\in \V).
\end{equation}

An element $c\in \V$ is an {\it idempotent} if $c^2=c$; it is said to be a {\it primitive idempotent} if it is nonzero and cannot be written as the sum of two other nonzero idempotents. 
By spectral decomposition theorem, corresponding to any nonzero idempotent $c$, there is a Jordan frame $\{e_1,e_2,\ldots, e_n\}$ such that 
$c=e_1+e_2+\cdots+e_k$
for some $k$, $1\leq k\leq n$. (Here, $e_1,e_2,\ldots, e_n$ are mutually orthogonal primitive idempotents); hence, the rank of such an idempotent is $k$.  Let  $\Ik$ denote  the set of all idempotents of rank $k$, $1\leq k\leq n$. 
We let 
$$\I=\bigcup_{k=1}^{n}\Ik=\mbox{Set of all nonzero idempotents}.$$
As $\V$ carries the trace inner product, the norm of any primitive idempotent is one and so
$\tr(c)=k$ for each $c\in \Ik$. As the set of all primitive idempotents is compact (\cite{faraut-koranyi}, page 78), it  follows easily that $\Ik$ is compact.  Hence, $\I$ is also compact.

In what follows, we use the letter $\veps$ for an element in $\V$ with $\veps^2=e$. Such an element will have eigenvalues $\pm 1$. We let 
$$ \E:=\{\veps\in \V: \veps^2=e\}\quad \mbox{and}\quad \I\circ \E:=\{c\circ \veps: c\in \I, \veps\in \E\}.$$

For any $z$ in $\V$ or in $\Rn$, {\it $S_k(z)$ denotes the sum of the first $k$ largest eigenvalues of $z$.} 
(Note that $\Rn$ is a Euclidean Jordan algebra in which the components of any vector are its eigenvalues, so our notation is consistent.)  \\

We let ${\cal L}(\V)$ denote the space of all (continuous) linear maps over $\V$. For $T\in {\cal L}(\V)$, we say that 
$T$ is 
\begin{itemize}
\item [(a)] {\it positive} if $T(\V_+)\subseteq \V_+$;
\item [(b)] {\it doubly substochastic} if $T$ is positive, $T(e)\leq e$ and $T^*(e)\leq e$, where $T^*$ denotes the adjoint of $T$; 
\item [(c)] {\it doubly stochastic} if $T$ is positive, $T(e)= e$ and $T^*(e)=e$; 
\item [(d)] {\it an algebra automorphism} if $T$ is invertible and $T(x\circ y)=T(x)\circ T(y)$ for all $x,y\in \V$;
\item [(e)]  {\it  a cone automorphism}, if  $T(\V_+)=\V_+$.
\end{itemize}
We respectively write 
DSS$(\V)$,  DS$(\V)$, Aut$(\V)$, and Aut$(\V_+)$ for the set of all doubly substochastic maps, doubly stochastic maps, algebra automorphisms, and cone automorphisms on $\V$.
The following results are known:
\begin{itemize}
\item [$\bullet$] $\mbox{conv(Aut}(\V))\subseteq DS(\V)$, where `conv' stands for the convex hull, see \cite{gowda-positive and ds}.
\item [$\bullet$] $T$ is doubly substochastic if and only if $\lambda(T(x))\wprec \lambda(x)$ for all $x\geq 0$ in $\V$, see \cite{jeong et al}, Theorem 3.3.
\item [$\bullet$]  $T$ is doubly stochastic if and only if $\lambda(T(x))\prec \lambda(x)$ for all $x$ in $\V$, see \cite{jeong-gowda}, Lemma 2.
\end{itemize}

\section{Results}

In this section, we establish our main result and state a consequence for positive maps. We will also consider a majorization result.

\begin{theorem}\label{main theorem}
{\it Let $T:\V\rightarrow \V$ be a linear map. Then, there exists  a unique $\eta(T)$ in $\Rnpdown$ such that
\begin{equation}\label{main inequality2}
\lambda(|T(x)|)\underset{w}{\prec} \eta(T)*\lambda(|x|)\quad \mbox{for all}\,\,x\in \V,
\end{equation}
with the additional property that if the above statement holds with $q$ in place of $\eta(T)$, then $\eta(T)\underset{w}{\prec} q$.
}
\end{theorem}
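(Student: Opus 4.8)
The plan is to exhibit the set of admissible vectors as a region cut out by lower bounds on partial sums, and then to read off the least element from Bapat's $\winf$ construction. Write $Q:=\{q\in\Rnpdown:\lambda(|T(x)|)\wprec q*\lambda(|x|)\text{ for all }x\in\V\}$. Since $q$ and $\lambda(|x|)$ are nonnegative and decreasing, $q*\lambda(|x|)$ is decreasing, so $S_k(q*\lambda(|x|))=\sum_{i=1}^{k}q_i\lambda_i(|x|)$; hence $q\in Q$ precisely when $S_k(|T(x)|)\le\sum_{i=1}^{k}q_i\lambda_i(|x|)$ for every $x\in\V$ and every $k$. I would first establish the identity
$$S_k(|T(x)|)=\max\{\langle x,\,T^*(c\circ\veps)\rangle:c\in\Ik,\ \veps\in\E\}\qquad(x\in\V,\ 1\le k\le n).$$
For ``$\le$'': writing $|T(x)|=T(x)\circ\veps$ with $\veps\in\E$ and using $\langle a\circ b,d\rangle=\langle a,b\circ d\rangle$, the variational principle (\ref{variational principle}) gives $S_k(|T(x)|)=\langle |T(x)|,c_0\rangle=\langle x,T^*(c_0\circ\veps)\rangle$ for some $c_0\in\Ik$, which is at most the right-hand side. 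For ``$\ge$'': for any $c\in\Ik$, $\veps\in\E$, the Fan--Theobald--von Neumann inequality (\ref{ftvn-hlp}) gives $\langle x,T^*(c\circ\veps)\rangle=\langle T(x),c\circ\veps\rangle\le\langle\lambda(|T(x)|),\lambda(|c\circ\veps|)\rangle$; by (\ref{Lemma 4.1}), $\lambda(|c\circ\veps|)\wprec\lambda(|c|)*\lambda(|\veps|)=\one_k$; and then (\ref{bhatia problem}) yields $\langle\lambda(|T(x)|),\lambda(|c\circ\veps|)\rangle\le\langle\lambda(|T(x)|),\one_k\rangle=S_k(|T(x)|)$. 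This is exactly where all three of the highlighted tools enter.

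Next I would prove that for a fixed $y\in\V$ and a fixed $k$, the condition ``$\langle x,y\rangle\le\sum_{i=1}^{k}q_i\lambda_i(|x|)$ for all $x\in\V$'' is equivalent to the finite list
$$S_j(|y|)\le S_j(q)\ \ (1\le j\le k)\qquad\text{and}\qquad \tr(|y|)\le S_k(q).$$
Indeed, by (\ref{ftvn-hlp}), with equality achieved by aligning a spectral frame of $x$ with one of $y$, one has $\sup\{\langle x,y\rangle:\lambda(|x|)=\mu\}=\langle\mu,\lambda(|y|)\rangle$ for each $\mu\in\Rnpdown$, so the condition reads $\langle\mu,\ \lambda(|y|)-(q_1,\dots,q_k,0,\dots,0)\rangle\le 0$ for all $\mu\in\Rnpdown$; since $\Rnpdown$ is the cone generated by $\one_1,\dots,\one_n$, this is $S_j(|y|)\le S_j\big((q_1,\dots,q_k,0,\dots,0)\big)$ for all $j$, which unwinds to the displayed list (the indices $j>k$ collapsing to $\tr(|y|)\le S_k(q)$ because $S_j(|y|)$ increases in $j$). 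Applying this with $y=T^*(c\circ\veps)$ and letting $(k,c,\veps)$ range over the identity of the previous step, I obtain
$$Q=\{q\in\Rnpdown:S_k(q)\ge\alpha_k\text{ for all }k\},\qquad \alpha_k:=\max\Big(\ \sup_{\substack{m\ge k,\,c\in\I^{(m)},\,\veps\in\E}}S_k(|T^*(c\circ\veps)|),\ \ \sup_{\substack{c\in\Ik,\,\veps\in\E}}\tr(|T^*(c\circ\veps)|)\ \Big),$$
with each $\alpha_k<\infty$ because $\Ik$ and $\E$ are compact and $T^*$ is continuous.

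To conclude: $Q\neq\emptyset$, since $q=M\,\one$ with $M\ge\max_k\alpha_k$ satisfies $S_k(q)=kM\ge\alpha_k$; so by Proposition \ref{bapat1991}(a) there is $q^*:=\winf(Q)\in\Rnpdown$ with $q^*\wprec q$ for all $q\in Q$ and $S_k(q^*)=\inf_{q\in Q}S_k(q)$. Because every $q\in Q$ satisfies $S_k(q)\ge\alpha_k$, so does $q^*$; hence $q^*\in Q$, i.e. $q^*$ is the $\wprec$-least member of $Q$. Setting $\eta(T):=q^*$, statement (\ref{main inequality2}) holds, and any $q\in\Rnpdown$ for which (\ref{main inequality2}) holds lies in $Q$ and therefore satisfies $\eta(T)\wprec q$; uniqueness is immediate since $\wprec$ is a partial order on $\Rnpdown$. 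The step I expect to be most delicate is the reduction in the third paragraph — in particular getting the auxiliary trace constraint $\tr(|y|)\le S_k(q)$ exactly right — since once $Q$ is presented in the form $\{q:S_k(q)\ge\alpha_k\}$, the membership $q^*=\winf(Q)\in Q$, which is the real content of the theorem, becomes automatic.
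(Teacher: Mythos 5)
Your proof is correct, and it reaches the theorem by a route organized differently from the paper's, although it runs on the same three tools (the Fan--Theobald--von Neumann inequality (\ref{ftvn-hlp}), the variational principle (\ref{variational principle}), and the inequality (\ref{Lemma 4.1})) together with Bapat's $\winf$ construction. The paper introduces the compact sets $\Lambda$ and $\Lambda^*$ of eigenvalue vectors of $|T(c\circ\veps)|$ and $|T^*(c\circ\veps)|$, proves in Lemma \ref{lemma2}$(ii)$ that $\wsup(\Lambda\cup\Lambda^*)$ itself lies in $Q$ via a two-case estimate on ranks ($l\leq k$ versus $l>k$), and then sets $\eta(T)=\wsup(S)=\winf(Q)$; the whole difficulty sits in that membership statement. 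You instead prove the exact variational formula $S_k(|T(x)|)=\max\{\langle x, T^*(c\circ\veps)\rangle : c\in\Ik,\ \veps\in\E\}$ and then dualize over $\mu=\lambda(|x|)\in\Rnpdown$ — the attainment in (\ref{ftvn-hlp}) is legitimate because one can build $x$ in the same Jordan frame as $y$ with prescribed signed eigenvalues, and $\Rnpdown$ is indeed the cone generated by $\one_1,\dots,\one_n$ — to exhibit $Q$ as $\{q\in\Rnpdown : S_k(q)\geq\alpha_k \text{ for all } k\}$; your bookkeeping in the third paragraph, including the collapse of the indices $j>k$ to the single constraint $\tr(|y|)\leq S_k(q)$ and the regrouping into $\alpha_k$, checks out, and your two pieces of $\alpha_k$ are the mirror image of the paper's $l\leq k$ / $l>k$ dichotomy. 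Since the construction described after Proposition \ref{bapat1991} gives $S_k(\winf(Q))=\inf_{q\in Q}S_k(q)$ (and guarantees $\winf(Q)\in\Rnpdown$), membership of $\winf(Q)$ in $Q$ is then automatic, as you say. The trade-off between the two approaches: the paper's route delivers, with no extra work, the representation $\eta(T)=\wsup(\Lambda)\underset{w}{\vee}\wsup(\Lambda^*)$ that drives Corollary \ref{corollary for positive maps} for positive maps, whereas your route isolates the structural reason the least element exists — membership in $Q$ constrains only the partial sums $S_k(q)$ from below — at the cost of describing $\eta(T)$ only implicitly through the thresholds $\alpha_k$ (note that $S_k(\eta(T))$ need not equal $\alpha_k$, only dominate it).
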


Before the proof, we cover some preliminary results. 
Let $T:\V\rightarrow \V$ be an arbitrary (but fixed) linear map. Correspondingly, we define the following sets in $\Rnpdown$:

\begin{equation}\label{Q}
Q: =\big \{q\in \Rnpdown: \lambda(|T(x)|)\underset{w}{\prec} q*\lambda(|x|)\quad \mbox{for all}\,\,x\in \V\big \},
\end{equation}
\begin{equation} \label{Lambda}
\Lambda: = \big \{\lambda(\abs{T(c \circ \varepsilon)}) :c \circ \varepsilon \in \ce\big \}, 
\end{equation}
\begin{equation}\label{Lambda*}
\Lambda^{*}: = \big \{\lambda(\abs{T^{*}(c \circ \varepsilon)}):  c \circ \varepsilon \in \ce\big \},
\end{equation}
\begin{equation}\label{S}
S:=\Lambda \cup \Lambda^*.
\end{equation}

We will show below that $Q$ is nonempty. Hence, by Proposition \ref{bapat1991}, $\winf(Q)$ is defined. Also, by the compactness of $\ce$ and the continuity of $T$ and $\lambda$, it follows that $\Lambda$ and $\Lambda^*$ are both compact. Hence, $\wsup(\Lambda$) and $\wsup(\Lambda^*)$ are defined. As $S$ is the union of $\Lambda$ and $\Lambda^*$, we have 
$$\wsup(S)=\wsup(\Lambda)\underset{w}{\vee}\wsup(\Lambda^*).$$

 In the following result, we describe the set $Q$ in different, but equivalent ways.

\begin{lemma}\label{lemma1}
{\it 
Let $T$ be a linear map on $\V$ and $q\in \Rnpdown$. 
Then, the following are equivalent:
\begin{itemize}
\item [(i)] $\lambda(|T(x)|)\underset{w}{\prec} q*\lambda(|x|)\,\, \mbox{for all}\,\,x\in \V.$
\item [(ii)] $\lambda(|T(c\circ \veps)|)\underset{w}{\prec} q*\lambda(|c\circ \veps|)\,\, \mbox{for all}\,\,c\circ \veps\in \ce.$
\item [(iii)] $\lambda(|T(c\circ \veps)|)\underset{w}{\prec} q*\lambda(|c|)\,\, \mbox{for all}\,\,c\circ \veps\in \ce.$
\item [(iv)] $\lambda(|T^*(x)|)\underset{w}{\prec} q*\lambda(|x|)\,\, \mbox{for all}\,\,x\in \V.$
\item [(v)] $\lambda(|T^*(c\circ \veps)|)\underset{w}{\prec} q*\lambda(|c\circ \veps|)\,\, \mbox{for all}\,\,c\circ \veps\in \ce$.
\item [(vi)] $\lambda(|T^*(c\circ \veps)|)\underset{w}{\prec} q*\lambda(|c|)\,\, \mbox{for all}\,\,c\circ \veps\in \ce.$
\end{itemize}
}
\end{lemma}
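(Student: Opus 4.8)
The plan is to establish the equivalences via the cycle (i) $\Rightarrow$ (ii) $\Rightarrow$ (iii) $\Rightarrow$ (i), then note that (iv), (v), (vi) are the same three statements applied to $T^*$ (since $(T^*)^* = T$ and $T^* \in {\cal L}(\V)$), and finally link the two halves by showing (i) $\Leftrightarrow$ (iv). The implication (i) $\Rightarrow$ (ii) is trivial, since (ii) is just (i) restricted to the special elements $x = c\circ\veps \in \ce$. The implication (iii) $\Rightarrow$ (ii) is also easy: for $c\circ\veps\in\ce$ we have $\lambda(|c\circ\veps|)\leq\lambda(|c|)$ because $|c\circ\veps| = |c|\circ\veps'$ for a suitable square root of $e$, or more directly because the eigenvalues of $c\circ\veps$ are $0,\pm 1$ with at most $\rank(c)$ nonzeros; combining with $q\geq 0$ and (\ref{bhatia problem}) gives $q*\lambda(|c\circ\veps|)\wprec q*\lambda(|c|)$, and transitivity of $\wprec$ on $\Rnpdown$ finishes it. Actually it is cleaner to prove (ii) $\Rightarrow$ (iii) and (iii) $\Rightarrow$ (i) and keep (i) $\Rightarrow$ (ii) trivial; I would choose whichever direction makes the reduction step (to arbitrary $x$) sit at (iii) $\Rightarrow$ (i).

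The substantive step is (iii) $\Rightarrow$ (i): passing from the inequality on idempotent-times-signature elements to all $x\in\V$. Here I would take an arbitrary $x\in\V$ with spectral decomposition $x = \sum x_i e_i$, write $|x| = x\circ\veps$ with $\veps^2 = e$ as in the preliminaries, and fix an index $k$. Using the variational principle (\ref{variational principle}), $S_k(|T(x)|) = \langle |T(x)|, c\rangle$ for some $c\in\Ik$; then $\langle |T(x)|, c\rangle = \langle T(x), c\circ\delta\rangle = \langle x, T^*(c\circ\delta)\rangle$ for an appropriate $\delta\in\E$ realizing $|T(x)| = T(x)\circ\delta$. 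Now apply (\ref{ftvn-hlp}) (or FTvN plus HLP) to bound $\langle x, T^*(c\circ\delta)\rangle \leq \langle \lambda(|x|), \lambda(|T^*(c\circ\delta)|)\rangle$. The hypothesis (iii), applied with $T^*$ — which is where I need the symmetry and hence should really prove the $T^*$-version of (iii) in parallel, or invoke (vi) — gives $\lambda(|T^*(c\circ\delta)|)\wprec q*\lambda(|c|)$, and since $c\in\Ik$ has $\lambda(|c|) = \one_k$, this is $\wprec q*\one_k$. Feeding this into (\ref{bhatia problem}) with $r = \lambda(|x|)$ yields $\langle\lambda(|x|),\lambda(|T^*(c\circ\delta)|)\rangle \leq \langle\lambda(|x|), (q*\one_k)^\downarrow\rangle = \langle\lambda(|x|)*\one_k, q\rangle$ — wait, I must be careful that $q$ and $\one_k$ are already decreasing so $(q*\one_k)^\downarrow = q*\one_k$ — giving $S_k(|T(x)|) \leq \sum_{i=1}^k (q*\lambda(|x|))_i = S_k(q*\lambda(|x|))$, which is exactly (i).

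The main obstacle I anticipate is bookkeeping the various sign elements $\veps, \delta$ and the idempotents $c$ correctly, and in particular making sure the argument genuinely needs only the $\ce$-version of the hypothesis on the \emph{right} map ($T$ for (i)$\Leftrightarrow$(ii)$\Leftrightarrow$(iii), $T^*$ for (iv)$\Leftrightarrow$(v)$\Leftrightarrow$(vi)) while the duality (i)$\Leftrightarrow$(iv) is what ties $T$ to $T^*$. To handle this cleanly I would first prove the block "(i)$\Leftrightarrow$(ii)$\Leftrightarrow$(iii)" for a general linear map, then observe it applies verbatim to $T^*$ to get "(iv)$\Leftrightarrow$(v)$\Leftrightarrow$(vi)", and finally prove (iii)$\Rightarrow$(iv) by exactly the variational-principle/FTvN/HLP computation sketched above (with the roles of $T$ and $T^*$ as indicated), together with its mirror (vi)$\Rightarrow$(i); these two cross-implications close the whole equivalence. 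One should also double-check the easy fact $\lambda(|c\circ\veps|)\leq\lambda(|c|) = \one_k$ for $c\in\Ik$, used to replace $\lambda(|c\circ\veps|)$ by $\one_k$, which follows because $(c\circ\veps)^2 \leq c$ when $\veps^2 = e$, or directly from a simultaneous spectral decomposition of $c$ and $\veps$.
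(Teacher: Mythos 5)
Your overall route is the paper's: the trivial specializations (i)$\Rightarrow$(ii) and (iv)$\Rightarrow$(v), the replacement of $\lambda(|c\circ\veps|)$ by $\lambda(|c|)$ for (ii)$\Rightarrow$(iii) and (v)$\Rightarrow$(vi), and the two cross-implications (iii)$\Rightarrow$(iv) and (vi)$\Rightarrow$(i) proved by exactly the computation you sketch: write $|T^*(x)|=T^*(x)\circ\veps$, pair with $c\in\Ik$, move to $\langle x,T(c\circ\veps)\rangle$ by adjointness, bound via (\ref{ftvn-hlp}), then the hypothesis together with (\ref{bhatia problem}), and finish with the variational principle (\ref{variational principle}). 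One caveat of phrasing: you cannot literally ``first prove the block (i)$\Leftrightarrow$(ii)$\Leftrightarrow$(iii)'' for a general $T$, since, as you yourself noticed, (iii)$\Rightarrow$(i) needs the dual hypothesis; but because you also supply both cross-implications, your set of implications closes the cycle (i)$\Rightarrow$(ii)$\Rightarrow$(iii)$\Rightarrow$(iv)$\Rightarrow$(v)$\Rightarrow$(vi)$\Rightarrow$(i), which is precisely how the paper argues, so nothing is structurally missing.

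The one step that is wrong as you justify it is the comparison of $\lambda(|c\circ\veps|)$ with $\lambda(|c|)$. The componentwise inequality $\lambda(|c\circ\veps|)\leq\lambda(|c|)$ is false in general, and so are all the reasons you offer: the eigenvalues of $c\circ\veps$ need not be $0,\pm1$, the rank of $c\circ\veps$ can exceed that of $c$, $(c\circ\veps)^2\leq c$ can fail, and $c$ and $\veps$ need not operator commute, so there is no simultaneous spectral decomposition. For instance, in the algebra of $2\times 2$ real symmetric matrices take $c=\diag(1,0)$ and $\veps$ the matrix with zero diagonal and off-diagonal entries $1$; then $c\circ\veps$ has eigenvalues $\pm\tfrac12$, so $\lambda(|c\circ\veps|)=(\tfrac12,\tfrac12)\not\leq(1,0)=\lambda(|c|)$, and $(c\circ\veps)^2=\tfrac14\,e\not\leq c$. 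What is true --- and all you need --- is the weak majorization $\lambda(|c\circ\veps|)\wprec\lambda(|c|)*\lambda(|\veps|)=\lambda(|c|)$, which is exactly the paper's key inequality (\ref{Lemma 4.1}); combined with (\ref{bhatia problem}) it gives $q*\lambda(|c\circ\veps|)\wprec q*\lambda(|c|)$, which is all that (ii)$\Rightarrow$(iii) and its $T^*$ twin require. With that substitution your plan coincides with the paper's proof. (Your earlier aside claiming (iii)$\Rightarrow$(ii) ``by transitivity'' runs the inequality the wrong way, but your final plan uses the correct direction (ii)$\Rightarrow$(iii), so that slip is moot.)
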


\begin{proof}
$(i)\Rightarrow (ii)$: This follows by specializing $x$ to  $c\circ \veps$.  \\
$(ii)\Rightarrow (iii)$: 
Suppose $(ii)$ holds so that  
$\lambda(|T(c\circ \veps)|)\underset{w}{\prec} q*\lambda(|c\circ \veps|)\,\, \mbox{for all}\,\, c\circ \veps\in \ce.$
As $\lambda(|\veps|)=\one$, in  view of (\ref{Lemma 4.1}) and (\ref{bhatia problem}), this simplifies to 
$\lambda(|T(c\circ \veps)|)\underset{w}{\prec} q*\lambda(|c|)\,\, \mbox{for all}\,\, c\circ \veps\in \ce,$ which is $(iii)$.\\
$(iii)\Rightarrow (iv)$:
We assume $(iii)$. To see $(iv)$, we have to show that for each index $k$, $1\leq k\leq n$,  and $x\in \V$, 
$$S_k(|T^*(x)|)\leq S_k(q*\lambda(|x|)).$$
Fix $k$ and $x$, and let $c\in \Ik$ be arbitrary. Then, writing $|T^*(x)|=T^*(x)\circ \varepsilon$ for some $\varepsilon \in \E$, we have 
$$
\begin{array}{lcl}
\langle |T^*(x)|,c\rangle &=&\langle T^*(x)\circ \varepsilon, c\rangle\\
                          &=&\langle T^*(x), c\circ \varepsilon\rangle\\
                          &=&\langle x,T(c\circ \varepsilon)\rangle\\
                          &\leq&  \langle \lambda(|x|),\lambda(|T(c\circ\varepsilon)|)\rangle\\
                          &\leq & \langle \lambda(|x|), q*\lambda(|c|)\rangle \\
                          &=&  \langle q*\lambda(|x|), \lambda(|c|)\rangle,
   \end{array}
$$
where the first inequality is due to (\ref{ftvn-hlp}),  
and the second one is due to condition $(iii)$ coupled with (\ref{bhatia problem}).   
Then, as $\lambda(|c|)=\lambda(c)=\one_k$, we have
$$\langle |T^*(x)|,c\rangle\leq \langle q*\lambda(|x|), \lambda(c)\rangle =\sum_{i=1}^{k} (q*\lambda(|x|))_i=S_k(q*\lambda(|x|)).$$
Now, taking the maximum over $c\in \I^{(k)}$ and using (\ref{variational principle}), we get $S_k(|T^*(x)|)\leq S_k(q*\lambda(|x|).$ This proves that 
$$\lambda(|T^*(x)|)\underset{w}{\prec} q*\lambda(|x|)\,\, \mbox{for all}\,\,x\in \V.$$\\
$(iv)\Rightarrow (v)$: This can be seen by specializing $x$ to $c\circ \veps$.\\
The implications $(v)\Rightarrow (vi)$ and $(vi)\Rightarrow (i)$ are seen by replacing  $T$ by $T^*$ in the implications $(ii)\Rightarrow (iii)$ and $(iii)\Rightarrow (iv)$. 
\end{proof}

\gap

\noindent{\bf Remark 1.} Recall that a linear map $T$ is {\it positive} if $T(\V_+)\subseteq \V_+$. For such a map, it is known (\cite{tao-jeong-gowda}, Example 3.7) that 
\begin{equation}\label{example 3.7}
\lambda(|T(x)|)\wprec \lambda(T(|x|))\,\,\mbox{for all}\,\, x\in \V.
\end{equation}
Using this and with appropriate modifications, 
we can simplify Lemma \ref{lemma1} and its proof as follows (stated without proof):\\

{\it When $T$ is positive and $q\in \Rnpdown$, the following are equivalent: 
\begin{itemize}
\item [(i)] $\lambda(|T(x)|)\underset{w}{\prec} q*\lambda(|x|)\,\, \mbox{for all}\,\,x\in \V.$
\item [(ii)] $\lambda(T(x))\underset{w}{\prec} q*\lambda(x)\,\, \mbox{for all}\,\,x\geq 0.$
\item [(iii)] $\lambda(T(c))\underset{w}{\prec} q*\lambda(c)\,\, \mbox{for all}\,\,c\in \I.$
\item [(iv)] $\lambda(T^*(x))\underset{w}{\prec} q*\lambda(x)\,\, \mbox{for all}\,\,x\geq 0.$
\item [(v)] $\lambda(T^*(c))\underset{w}{\prec} q*\lambda(c)\,\, \mbox{for all}\,\,c\in {\cal I}.$
\end{itemize}
}
In \cite{niezgoda}, Theorem 3.1, Niezgoda proves a result for certain types of linear  maps in the setting of Eaton triples. When specialized, 
it will yield a result of the above type for positive linear maps on {\it simple} Euclidean Jordan algebras.
(Note: So far, it is only known that every simple Euclidean Jordan algebra is a normal decomposition system, equivalently, an Eaton triple \cite{lim et al}.) 
\gap

\begin{lemma} \label{lemma2}
Given a linear map $T:\V\rightarrow \V$, consider the sets $Q$ and $S$ defined in (\ref{Q})-(\ref{S}). Then the following hold:
\begin{itemize}
\item [(i)] $s\underset{w}{\prec} q$ for all $s\in S$ and $q\in Q$.
\item [(ii)] $\wsup(S)\in Q$.
\item [(iii)] $\wsup(S)=\winf(Q).$
\end{itemize}
\end{lemma}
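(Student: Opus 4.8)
The plan is to prove the three items in the order (i), (ii), (iii), since (iii) is an immediate formal consequence of (i), (ii), and the characterization of $\winf$ in Proposition \ref{bapat1991}(a). For item (i), fix $q \in Q$ and $s \in S$. By definition of $S = \Lambda \cup \Lambda^*$, the vector $s$ has the form $\lambda(|T(c\circ\veps)|)$ or $\lambda(|T^*(c\circ\veps)|)$ for some $c\circ\veps \in \ce$. Suppose first $s = \lambda(|T(c\circ\veps)|)$ with $c \in \I^{(k)}$. Since $q \in Q$, condition (i) of Lemma \ref{lemma1} holds, hence so does condition (iii) of that lemma: $\lambda(|T(c\circ\veps)|) \wprec q*\lambda(|c|)$. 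Now $\lambda(|c|) = \lambda(c) = \one_k$, so $q*\lambda(|c|) = q*\one_k = (q_1,\dots,q_k,0,\dots,0)$, which is already in $\Rnpdown$ since $q$ has decreasing components. Therefore $s \wprec q*\one_k$. It remains to check $q*\one_k \wprec q$, which is clear: for each index $j$, $S_j(q*\one_k) = \sum_{i=1}^{\min(j,k)} q_i \leq \sum_{i=1}^{j} q_i = S_j(q)$. Transitivity of $\wprec$ gives $s \wprec q$. If instead $s = \lambda(|T^*(c\circ\veps)|)$, the same argument applies using condition (vi) of Lemma \ref{lemma1} (which also holds whenever $q \in Q$) in place of (iii). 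This proves (i).

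For item (ii), I must show $\wsup(S) \in Q$, i.e.\ that $\wsup(S)$ satisfies condition (i) of Lemma \ref{lemma1}. By Lemma \ref{lemma1} it suffices to verify condition (iii), namely $\lambda(|T(c\circ\veps)|) \wprec \wsup(S)*\lambda(|c|)$ for all $c\circ\veps \in \ce$. Fix such a $c\circ\veps$ with $c \in \I^{(k)}$. On one hand, $\lambda(|T(c\circ\veps)|) \in \Lambda \subseteq S$, so by the defining property of $\wsup(S)$ (Proposition \ref{bapat1991}(b)) we have $\lambda(|T(c\circ\veps)|) \wprec \wsup(S)$. On the other hand, the nonzero eigenvalues of $T(c\circ\veps)$ are at most $\rank(c\circ\veps)\le k$ in number --- here I would invoke that $c\circ\veps$ has rank at most $k$ (since $c$ does and multiplying by $\veps$, whose quadratic representation $P_\veps$ is a cone automorphism... actually more simply, $|T(c\circ\veps)|$ need not have small rank). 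Let me instead argue directly: combining $\lambda(|T(c\circ\veps)|)\wprec\wsup(S)$ with the fact that I also need the ``$k$-truncation,'' I use the weak-majorization inequality (\ref{Lemma 4.1}) together with the bound $\lambda(|T(c\circ\veps)|)\wprec q*\lambda(|c|)$ available for any $q\in Q$... but $Q$ may be large. The cleaner route: show that each $s \in S$ satisfies $s \wprec \wsup(S)$ \emph{and} $s_i = 0$ for $i > \rank$, which combined yield $s \wprec \wsup(S)*\one_k$ when $s$ comes from a rank-$k$ idempotent. Indeed if $s = \lambda(|T(c\circ\veps)|)$ with $c\in\I^{(k)}$, then since $P_\veps$ is a cone automorphism and hence rank-preserving, $\rank(c\circ\veps)=\rank(P_\veps^{1/2}\cdots)$... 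I will verify $\rank(c\circ\veps)\le k$ via the identity $|x|$ having the same rank as $x$ and $c\circ\veps = $ (an invertible map applied to $c$), giving $\rank(T(c\circ\veps)) \le \rank(c\circ\veps) = k$, so $s$ has at most $k$ nonzero entries. Hence $s \wprec \wsup(S)$ with $s$ supported on the first $k$ coordinates forces $s \wprec \wsup(S)*\one_k = \wsup(S)*\lambda(|c|)$ (using that $\wsup(S) \in \Rnpdown$ so its $k$-truncation is the correct dominating vector). This establishes condition (iii), hence $\wsup(S) \in Q$.

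For item (iii): by Proposition \ref{bapat1991}(a), $\winf(Q)$ is the unique element of $\Rnpdown$ satisfying $\winf(Q) \wprec q$ for all $q \in Q$, together with the maximality property that any $p \in \Rnp$ with $p \wprec q$ for all $q \in Q$ obeys $p \wprec \winf(Q)$. Item (i) says $\wsup(S) \wprec q$ for all $q \in Q$ (note $\wsup(S) \in \Rnpdown \subseteq \Rnp$), so by maximality $\wsup(S) \wprec \winf(Q)$. Item (ii) says $\wsup(S) \in Q$, so by the defining property of $\winf(Q)$ we get $\winf(Q) \wprec \wsup(S)$. Since both vectors lie in $\Rnpdown$, where $\wprec$ is antisymmetric (a partial order, as noted before Proposition \ref{bapat1991}), we conclude $\wsup(S) = \winf(Q)$.

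The main obstacle is item (ii), specifically the rank bookkeeping: I need that $\lambda(|T(c\circ\veps)|)$, for $c$ of rank $k$, is supported in its first $k$ coordinates so that weak-majorization by $\wsup(S)$ upgrades to weak-majorization by the $k$-truncated vector $\wsup(S)*\one_k$. This hinges on $\rank(c\circ\veps) \le k$, i.e.\ on $c \mapsto c\circ\veps$ not increasing rank; since $\veps^2 = e$, the map $x\mapsto x\circ\veps$ equals $P_{\veps}$ composed suitably --- more directly $x \circ \veps$ and $x$ share support in the Jordan frame diagonalizing both (as $\veps$ is a function of that frame when $x=c$ is built from it), giving $\rank(c\circ\veps)=\rank(c)=k$, and then $\rank(T(c\circ\veps)) \le \rank(c\circ\veps)$ is false in general for arbitrary linear $T$! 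So I must be careful: $T$ can increase rank. The fix is that I do \emph{not} need $T(c\circ\veps)$ to have small rank; I only need, in verifying condition (iii), the inequality at the level of partial sums $S_j$, and for $j \ge k$ the right-hand side $S_j(\wsup(S)*\lambda(|c|)) = \sum_{i=1}^k \wsup(S)_i = S_k(\wsup(S)*\lambda(|c|))$ stabilizes, while for $j \le k$ I just use $S_j(\lambda(|T(c\circ\veps)|)) \le S_j(\wsup(S)) = \sum_{i=1}^j\wsup(S)_i = S_j(\wsup(S)*\lambda(|c|))$ from $\lambda(|T(c\circ\veps)|)\wprec\wsup(S)$. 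The genuinely delicate point is the range $j > k$: there one needs $S_j(\lambda(|T(c\circ\veps)|)) \le S_k(\wsup(S))$, i.e.\ the \emph{total} of the $j$ largest eigenvalues of $|T(c\circ\veps)|$ is controlled by only the first $k$ coordinates of $\wsup(S)$ --- this is exactly where (\ref{Lemma 4.1}) and condition (iii) from the \emph{other} direction of Lemma \ref{lemma1} must be leveraged, and I expect this to be the crux of the argument that requires the most care.
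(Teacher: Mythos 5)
Your items (i) and (iii) are correct and essentially coincide with the paper's argument: for (i) you pass, via Lemma \ref{lemma1}, to $\lambda(|T(c\circ\veps)|)\wprec q*\lambda(c)\wprec q$, and (iii) is the same formal combination of (i), (ii) and Proposition \ref{bapat1991} with antisymmetry on $\Rnpdown$. The genuine gap is in item (ii). You correctly reduce it to showing, for $c\in\Ik$ and $q:=\wsup(S)$, that $S_l(|T(c\circ\veps)|)\leq S_{\min\{k,l\}}(q)$ for all $l$; the case $l\leq k$ indeed follows at once from $\lambda(|T(c\circ\veps)|)\in\Lambda\subseteq S$, hence $\wprec\wsup(S)$. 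But for $l>k$ you never produce an argument: your first attempt (rank bookkeeping, $\rank(T(c\circ\veps))\leq k$) fails for arbitrary linear $T$, as you yourself note, and your ``fix'' ends by announcing that (\ref{Lemma 4.1}) and the other direction of Lemma \ref{lemma1} ``must be leveraged'' and that this is the crux requiring the most care. That crux is precisely the content of the lemma --- it is the reason $S$ is defined as $\Lambda\cup\Lambda^*$ rather than just $\Lambda$ --- so as written the proof of (ii) is incomplete.

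The missing step is a duality argument through the adjoint, carried out with the variational principle (\ref{variational principle}). Fix $c\in\Ik$, $l>k$, write $|T(c\circ\veps)|=T(c\circ\veps)\circ\veps'$ with $\veps'\in\E$, and take any $c'\in\I^{(l)}$. Then
\begin{align*}
\ip{|T(c\circ\veps)|}{c'} &= \ip{T(c\circ\veps)\circ\veps'}{c'}=\ip{T(c\circ\veps)}{c'\circ\veps'}=\ip{c\circ\veps}{T^*(c'\circ\veps')}=\ip{c}{[T^*(c'\circ\veps')]\circ\veps}\\
&\leq \ip{\lambda(c)}{\lambda\big(\abs{[T^*(c'\circ\veps')]\circ\veps}\big)}\leq \ip{\lambda(c)}{\lambda\big(\abs{T^*(c'\circ\veps')}\big)}\leq \ip{\one_k}{\wsup(S)}=S_k\big(\wsup(S)\big),
\end{align*}
where the first inequality is (\ref{ftvn-hlp}), the second uses (\ref{Lemma 4.1}) together with (\ref{bhatia problem}) (note $\lambda(c)=\one_k$ is nonnegative and decreasing), and the last uses that $\lambda(\abs{T^*(c'\circ\veps')})\in\Lambda^*\subseteq S$, hence is weakly majorized by $\wsup(S)$, again via (\ref{bhatia problem}). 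Taking the maximum over $c'\in\I^{(l)}$ and invoking (\ref{variational principle}) gives $S_l(|T(c\circ\veps)|)\leq S_k(\wsup(S))$, which is exactly the estimate your proposal leaves open; with it, your verification of condition (iii) of Lemma \ref{lemma1} for $\wsup(S)$, and hence item (ii), is complete.
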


\begin{proof}
We first observe that $Q$ is nonempty. This can be seen by taking $q=t\,\one$, where $t$ is a large positive number, and using Item $(iii)$ in Lemma \ref{lemma1} along with the compactness of $\Lambda$.\\ 
$(i)$ Let $q\in Q$ so that (by the previous lemma), 
$\lambda(\abs{T(c \circ \varepsilon)}) \underset{w}{\prec} q \ast \lambda(c)$  and $\lambda(\abs{T^*(c \circ \varepsilon)}) \underset{w}{\prec} q \ast \lambda(c)$ 
for all $c\circ \veps\in \ce$. Let $s\in S=\Lambda \cup \Lambda^*$. If 
$s\in \Lambda$, then $s=\lambda(\abs{T(c \circ \varepsilon)})$ for some $c\circ \veps\in \ce$, in which case,
$$\lambda(\abs{T(c \circ \varepsilon)}) \underset{w}{\prec} q \ast \lambda(c)\wprec q,$$ 
where the second inequality follows from the nonnegativity of $q$ and the fact that $\lambda(c)=\one_k$ for some index $k$.
A similar statement ensues if $s\in \Lambda^*$. Hence, $s\wprec q$. This proves $(i)$. We also see, by Proposition \ref{bapat1991} that   
$$\wsup(S)\wprec \winf(Q).$$
$(ii)$ Let  $q:=\wsup(S)$. 
To see $q\in Q$, it is enough to show by Lemma \ref{lemma1}, 
        \begin{equation} \label{eq: weak majorization inequality2}
                \lambda(\abs{T(c \circ \varepsilon)}) \underset{w}{\prec} q \ast \lambda(c)
        \end{equation}
        for all $c\circ \veps\in \ce$. Consider an index $k$ and  $c \in \Ik$. Then \eqref{eq: weak majorization inequality2} is equivalent to
        \begin{equation} \label{eq: weak majorization inequality3}
                S_l(\abs{T(c \circ \varepsilon)}) \leq S_{\min\{k,\, l\}}(q)\quad\mbox{for all}\,\,1\leq l\leq n.
        \end{equation}
        Now, fix $c \circ \varepsilon\in \ce$,  $l\in \{1,2,\ldots,n\}$, and  choose $\varepsilon' \in \E$ such that  $\abs{T(c \circ \varepsilon)} = T(c \circ \varepsilon) \circ \varepsilon'$, and let 
 $c' \in \I^{(l)}$. If $l \leq k$, then we have
        \begin{align*}
                \ip{\abs{T(c \circ \varepsilon)}}{c'} & = \ip{T(c \circ \varepsilon) \circ \varepsilon'}{c'} \\
                & \leq \ip{\lambda(\abs{T(c \circ \varepsilon) \circ \varepsilon'})}{\lambda(c')} \\
                & \leq \ip{\lambda(\abs{T(c \circ \varepsilon)})}{\lambda(c')} \\
                & \leq \ip{q}{\lambda(c')} \\
                & = S_l(q),
        \end{align*}
where the first inequality is due to (\ref{ftvn-hlp}),  
and the second  inequality is due to (\ref{Lemma 4.1}) coupled with (\ref{bhatia problem}). The last inequality is due to $\lambda(\abs{T(c \circ \varepsilon)})\wprec q$ as $q=\wsup(S)$. By taking the supremum over $c' \in \I^{(l)}$, we get $S_l(\abs{T(c \circ \varepsilon)}) \leq S_l(q)$.

 On the other hand, if $l > k$, then
        \begin{align*}
                \ip{\abs{T(c \circ \varepsilon)}}{c'} & = \ip{T(c \circ \varepsilon) \circ \varepsilon'}{c'} \\
                & = \ip{T(c \circ \varepsilon)}{c' \circ \varepsilon'} \\
                & = \ip{c \circ \varepsilon}{T^*(c' \circ \varepsilon')} \\
                & = \ip{c}{[T^*(c' \circ \varepsilon')] \circ \varepsilon} \\
                & \leq \ip{\lambda(c)}{\lambda(\abs{[T^*(c' \circ \varepsilon')] \circ \varepsilon})} \\
                & \leq \ip{\lambda(c)}{\lambda(\abs{T^*(c' \circ \varepsilon')})} \\
                & \leq \ip{\lambda(c)}{q} \\
                & = S_k(q),
        \end{align*}
where the first inequality is due to (\ref{ftvn-hlp}), second one due to (\ref{Lemma 4.1}), and the last one is to  the inequality $\lambda(\abs{T^*(c' \circ \varepsilon')})\wprec q=\wsup (S)$.  Again, taking the supremum over $c' \in \I^{(l)}$ we get
 $S_l(\abs{T(c \circ \varepsilon)}) \leq S_k(q)$. Hence, we have proved (\ref{eq: weak majorization inequality3}), so $q=\wsup(S)\in Q$. 
This proves $(ii)$.\\
$(iii)$ From $(ii)$, $q:=\wsup(S)\in Q$, hence, $\winf(Q)\wprec q= \wsup(S)$. From $(i)$, the reverse inequality holds. Since the weak-majorization ordering is antisymmetric on $\Rnpdown$, we have $(iii)$.
\end{proof}

We now come to the proof of our main theorem.

\gap 

\noindent{\bf Proof of Theorem \ref{main theorem}.}
Given $T$, we define $Q$ and $S$ as  (\ref{Q})-(\ref{S}). Let $\eta(T):=\wsup(S)$. As this belongs to $Q$, we see statement (\ref{main inequality2}) in Theorem \ref{main theorem}. The additional item
 follows from the equality $\eta(T)=\wsup(S)=\winf(Q)$.
$\hfill$ $\qed$

\gap

Generally, finding/describing  $\eta(T)$ may not be easy. However, when $T$ is a positive map, we have a simple expression for $\eta(T)$.

\begin{corollary}\label{corollary for positive maps}
Let $T$  be a positive linear map on $\V$. Then, $\wsup(\Lambda)= \lambda(T(e))$ and $\wsup(\Lambda^*)= \lambda(T^*(e))$. 
Hence,
$$\eta(T)=\lambda\big (T(e)\big )\underset{w}{\vee}\lambda\big (T^*(e)\big ).$$
In particular, if $T$ is also self-adjoint, then $\eta(T)=\lambda\big (T(e)\big )$.
\end{corollary}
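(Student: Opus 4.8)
The plan is to prove the two claimed formulas $\wsup(\Lambda) = \lambda(T(e))$ and $\wsup(\Lambda^*) = \lambda(T^*(e))$ separately and symmetrically, and then invoke the identity $\wsup(S) = \wsup(\Lambda) \underset{w}{\vee} \wsup(\Lambda^*)$ recorded just before Lemma \ref{lemma1}, together with Theorem \ref{main theorem}'s identification $\eta(T) = \wsup(S)$. Since $T^*$ is positive whenever $T$ is (the symmetric cone is self-dual, so $\langle T^*(y), x\rangle = \langle y, T(x)\rangle \ge 0$ for all $x, y \in \V_+$), it suffices to establish $\wsup(\Lambda) = \lambda(T(e))$ and then apply the same argument with $T$ replaced by $T^*$. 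The self-adjoint case is then immediate: $T = T^*$ forces $\Lambda = \Lambda^*$, hence $\eta(T) = \lambda(T(e)) \underset{w}{\vee} \lambda(T(e)) = \lambda(T(e))$.

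For $\wsup(\Lambda) = \lambda(T(e))$ I would argue two inclusions in the sense of the weak-majorization partial order. First, the ``upper bound'' direction: I claim $\lambda(|T(c\circ\veps)|) \wprec \lambda(T(e))$ for every $c\circ\veps \in \ce$, which by Proposition \ref{bapat1991}(b) gives $\wsup(\Lambda) \wprec \lambda(T(e))$. To see this, fix $c \in \Ik$ and $\veps \in \E$. Then $|c\circ\veps| = |c|\circ|\veps|$-type manipulations via \eqref{Lemma 4.1} give $\lambda(|c\circ\veps|) \wprec \lambda(|c|)*\lambda(|\veps|) = \one_k * \one = \one_k = \lambda(c)$, hence $c\circ\veps \wprec c \le e$; more directly, $-e \le c\circ\veps \le e$ since the eigenvalues of $c\circ\veps$ lie in $[-1,1]$ (a consequence of \eqref{Lemma 4.1} bounding them, or simply: $c$ and $\veps$ operator-commute is not available, so I would instead use \eqref{example 3.7}). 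Indeed, using positivity of $T$ and \eqref{example 3.7}, $\lambda(|T(c\circ\veps)|) \wprec \lambda(T(|c\circ\veps|))$; and $|c\circ\veps| \le e$ would then give, via Hirzebruch monotonicity, $\lambda(T(|c\circ\veps|)) \le \lambda(T(e))$, so $\lambda(|T(c\circ\veps)|) \wprec \lambda(T(e))$. The cleanest route for the bound $|c\circ\veps| \le e$ is to note $\lambda(|x\circ\veps|) \wprec \lambda(|x|)*\one = \lambda(|x|)$ with $x = c$, giving all eigenvalues of $|c\circ\veps|$ at most $1$, i.e.\ $|c\circ\veps| \le e$. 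For the reverse direction, I observe $T(e) = T(|e\circ e|)$ with $e \in \I^{(n)} \subseteq \I$ and $e \in \E$, so $e\circ e = e \in \ce$; since $T(e) \ge 0$, $|T(e)| = T(e)$, hence $\lambda(T(e)) = \lambda(|T(e)|) = \lambda(|T(e\circ e)|) \in \Lambda$. Therefore $\lambda(T(e)) \wprec \wsup(\Lambda)$ by definition of $\wsup$. Combining with antisymmetry of $\wprec$ on $\Rnpdown$ yields $\wsup(\Lambda) = \lambda(T(e))$.

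I expect the main obstacle to be the cleanest justification of $|c\circ\veps| \le e$, equivalently that the eigenvalues of $c\circ\veps$ lie in $[-1,1]$ for $c$ a nonzero idempotent and $\veps^2 = e$. In a general Euclidean Jordan algebra $c$ and $\veps$ need not operator-commute, so one cannot simultaneously diagonalize them; the right tool is precisely the inequality \eqref{Lemma 4.1}, applied as $\lambda(|c\circ\veps|) = \lambda(|c\circ\veps|) \wprec \lambda(|c|) * \lambda(|\veps|)$, whose largest component is $\lambda_1(|c|)\cdot\lambda_1(|\veps|) = 1\cdot 1 = 1$; since $\lambda_1(|c\circ\veps|) \le \lambda_1(|c|)\lambda_1(|\veps|)$ is exactly the $k=1$ case of weak majorization, we get $\lambda_1(|c\circ\veps|) \le 1$, i.e.\ $|c\circ\veps| \le e$ and hence $-e \le c\circ\veps \le e$. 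Everything else — monotonicity of $\lambda$ under $\le$ (Hirzebruch), $|T(e)| = T(e)$ from positivity, and the membership $e \in \ce$ — is routine. Once $\wsup(\Lambda) = \lambda(T(e))$ and $\wsup(\Lambda^*) = \lambda(T^*(e))$ are in hand, the displayed formula $\eta(T) = \lambda(T(e)) \underset{w}{\vee} \lambda(T^*(e))$ and its self-adjoint specialization follow formally, with no further work.
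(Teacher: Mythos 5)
Your proposal is correct and follows essentially the same route as the paper's own proof: bound $|c\circ\veps|\le e$ via (\ref{Lemma 4.1}), use positivity of $T$ with Hirzebruch monotonicity and (\ref{example 3.7}) to get $\lambda(|T(c\circ\veps)|)\wprec\lambda(T(e))$, note $e\in\ce$ for the reverse weak-majorization, and pass to $T^*$ by self-duality of $\V_+$. The only cosmetic difference is that you spell out $|T(e)|=T(e)$ and the antisymmetry step explicitly, which the paper leaves implicit.
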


\begin{proof}  Consider $c\circ \veps\in \ce$. Since $\lambda(\abs{c \circ \varepsilon}) \underset{w}{\prec} \lambda(c) \leq \lambda(e)=\one$, the first component in $\lambda(\abs{c \circ \varepsilon})$ is less than or equal to $1$. Hence all components in $\lambda(\abs{c \circ \varepsilon})$ are less than or equal to one. 
It follows (by considering the spectral decomposition) that $\abs{c \circ \varepsilon} \leq e$. 
As $T$ is positive, $T(\abs{c \circ \varepsilon})\leq T(e)$ and so $\lambda(T(\abs{c \circ \varepsilon}))\leq \lambda( T(e))$. However, by (\ref{example 3.7}),
$$\lambda(\abs{T(c\circ \veps)}) \underset{w}{\prec} \lambda(T(\abs{c \circ \varepsilon})).$$
Hence, $\lambda(\abs{T(c\circ \veps)}) \underset{w}{\prec}\lambda(T(e)).$ As $c\circ \veps$ is arbitrary in $\ce$, we see that 
$$\wsup(\Lambda)\wprec \lambda(T(e)).$$
But $e\in \ce$, and so, $\lambda(T(e))\wprec \wsup(\Lambda).$ Thus, 
$$\wsup(\Lambda)= \lambda(T(e)).$$ 
Now, $T^*$ is also positive (this is due to $\V_+$ being a self-dual cone); hence, by above, $\wsup(\Lambda^*)= \lambda(T^*(e))$. So, 
$$\eta(T)=\wsup(\Lambda\cup\Lambda^*)=\lambda(T(e))\underset{w}{\vee} \lambda(T^*(e)).$$ 
\end{proof}

\gap

We note a simple bound when $T$ is positive: 
$$\eta(T)=\lambda(T(e))\underset{w}{\vee}\lambda(T^*(e))\wprec \max\{\lambda(T(e)),\lambda(T^*(e))\}.$$

\gap

\noindent{\bf Remark 2.} In the last statement of the above corollary, the requirement that $T$ be self-adjoint can be slightly relaxed.  Suppose $T=P\,\Phi$, where $\Phi$ is an algebra automorphism of $\V$ and $P$ is 
positive and self-adjoint; see Example 4 for such a map.  As $\V$ carries the trace inner product, 
$\Phi^*$ is also an algebra automorphism of $\V$, hence preserves eigenvalues. So, $\lambda((T^*(e))=\lambda(\Phi^*P(e))=\lambda(P(e))=\lambda(P(\Phi(e)))=\lambda(T(e))$. 
Thus, when $T=P\,\Phi$, 
$$\eta(T)=\lambda(T(e)).$$ We note that if $T=\Phi\,P$, where $\Phi$ and $P$ are as above, then, $\eta(T)=\lambda(T^*(e))$.\\

\gap
 
Motivated by our  main theorem, we ask if (\ref{main inequality2}) has a majorization analog. The following result provides an answer.
\\

In what follows, we use the fact that $(-\one*\lambda(x))^\downarrow =\lambda(-x)$ for all $x\in \V$, and note that  $p\prec q$ in $\Rn$ is, by definition,  equivalent to $p^\downarrow \prec q^\downarrow$. 

\begin{theorem}
Let $T$ be a  linear map on $\V$. Then, the following are equivalent:
\begin{itemize}
\item [(i)] There exists a vector $q$ in $\Rn$ with decreasing components such that $\lambda(T(x))\prec q*\lambda(x)$ for all $x\in \V$. 
\item [(ii)] $T$ is a scalar multiple of a doubly stochastic map. 
\end{itemize}
\end{theorem}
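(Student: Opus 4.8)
The plan is to prove the two implications separately, with the direction $(ii)\Rightarrow(i)$ being the easy one and $(i)\Rightarrow(ii)$ carrying the real content. For $(ii)\Rightarrow(i)$: if $T=t\,D$ with $D$ doubly stochastic and $t\geq 0$, then by the characterization of doubly stochastic maps recalled in Section 2, $\lambda(D(x))\prec\lambda(x)$ for all $x$, and since scaling by $t\geq 0$ preserves majorization (this is the $r\geq 0$ case of (\ref{bhatia problem}) applied with $r=t\one$, which also handles the trace-equality part since $t$ is constant), we get $\lambda(T(x))=t\,\lambda(D(x))\prec t\one*\lambda(x)$, so $q=t\one$ works. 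If $t<0$, write $T=-|t|D$; then $\lambda(T(x))=\lambda(-|t|D(x))=(-\one*|t|\lambda(D(x)))^\downarrow=|t|\lambda(-D(x))=|t|\lambda(D(-x))$, and applying the doubly stochastic majorization to $-x$ gives $\lambda(D(-x))\prec\lambda(-x)=(-\one*\lambda(x))^\downarrow$, so $q=|t|(-\one)$ (reordered, i.e.\ $q$ with decreasing components equal to $|t|$ in the negative sense) works; more cleanly one observes $q=t\,\one$ still works after taking decreasing rearrangements on both sides. In either case a suitable $q$ with decreasing components exists.

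For $(i)\Rightarrow(ii)$, suppose $\lambda(T(x))\prec q*\lambda(x)$ for all $x\in\V$. First I would extract sign/scaling information. Since majorization implies equality of traces, $\tr(T(x))=\langle\one,\lambda(T(x))\rangle=\langle\one,(q*\lambda(x))^\downarrow\rangle$; the subtle point is that the right side is \emph{not} linear in $x$ in general, yet the left side is, so I would test on $x=e$ and $x=-e$: from $x=e$, $\lambda(T(e))\prec q$ forces $\tr(T(e))=\sum q_i$; from $x=-e$, $\lambda(-T(e))\prec q*(-\one)$ whose decreasing rearrangement has entry-sum $-\sum q_i$, giving $\tr(-T(e))=-\sum q_i$, consistent — so set $t:=\tr(T(e))/n$ if $n\mid$ nothing special is needed; actually I expect one shows $q=t\,\one$ with $t=\tr(T(e))/n$ by a more careful argument: apply the hypothesis to $x$ and to $-x$ and use $\lambda(-y)=(-\one*\lambda(y))^\downarrow$ together with $S_k$-monotonicity to squeeze $q$ between $t\one$ from above and below. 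Concretely, $p\prec q*\lambda(x)$ for \emph{all} $x$, including $x$ ranging over all primitive idempotents and their negatives and sums, should force $q$ to be a constant vector: if $q_1>q_n$, choosing $x$ a primitive idempotent makes $q*\lambda(x)=(q_1,0,\dots,0)$ with largest partial sum $q_1$, while choosing $x=e-\,(\text{idempotent of rank }n-1)$ or similar extreme elements produces a majorization constraint incompatible with non-constant $q$ unless $T$ degenerates; I would pin this down via the variational principle (\ref{variational principle}) and the fact that $S_k(q*\lambda(x))$ for $x$ an idempotent of rank $k$ equals $q_1+\dots+q_k$ while $S_k$ of $T$ on such idempotents must be controlled uniformly.

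Once $q=t\,\one$ is established, the hypothesis reads $\lambda(T(x))\prec t\,\lambda(x)$ for all $x\in\V$. If $t=0$ then $T=0$, a (degenerate) scalar multiple of any doubly stochastic map; otherwise $t\neq0$. If $t>0$, set $D:=\tfrac1t T$; then $\lambda(D(x))\prec\lambda(x)$ for all $x$, which by the cited characterization (\cite{jeong-gowda}, Lemma 2) is \emph{exactly} the statement that $D$ is doubly stochastic, so $T=tD$ as desired. If $t<0$, set $D:=\tfrac1t T=-\tfrac1{|t|}T$; then $\lambda(D(x))=\lambda(-\tfrac1{|t|}T(x))=(-\one*\tfrac1{|t|}\lambda(T(x)))^\downarrow=\tfrac1{|t|}\lambda(-T(x))$, and since $\lambda(T(x))\prec t\lambda(x)$ applied at $-x$ gives, after rearrangement, $\lambda(-T(x))\prec -t\lambda(-x)=|t|\lambda(x)$, hence $\lambda(D(x))\prec\lambda(x)$ for all $x$, so again $D$ is doubly stochastic and $T=tD$. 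The main obstacle is the first part of this implication — showing $q$ must be a constant vector — because the map $x\mapsto q*\lambda(x)$ is nonlinear, so one cannot simply plug in a basis; the right tool is to exploit linearity of $T$ against the rigidity of majorization by evaluating on carefully chosen idempotents and their reflections, using (\ref{variational principle}) and the monotonicity properties in (\ref{bhatia problem}).
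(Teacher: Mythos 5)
The easy direction $(ii)\Rightarrow(i)$ and the endgame of $(i)\Rightarrow(ii)$ (once $q=t\one$ is known: $t=0$ gives $T=0$, otherwise divide by $t$, handle the sign, and invoke the characterization of doubly stochastic maps from \cite{jeong-gowda}) are fine and essentially match the paper. The genuine gap is exactly at the step you yourself flag as the main obstacle: proving that $(i)$ forces $q$ to be a constant vector. Your plan does not carry this out. Testing at $x=\pm e$ only yields consistency, as you concede; and your proposed ``extreme element'' $e-(\mbox{idempotent of rank } n-1)$ is itself a primitive idempotent, so it adds no new constraint; ``incompatible with non-constant $q$ unless $T$ degenerates'' is a hope, not an argument. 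The idea your proposal never uses, and which is what makes the implication work, is to apply the equality-of-sums part of majorization to \emph{arbitrary} $x$ and rewrite the linear side via the adjoint: since $\sum_i\lambda_i(T(x))=\tr(T(x))=\langle x,T^*(e)\rangle$, hypothesis $(i)$ gives the identity $\langle x,T^*(e)\rangle=\langle q,\lambda(x)\rangle$ for all $x\in\V$, a linear function of $x$ on the left against a spectral function on the right. The paper exploits this by first showing $\lambda(T^*(e))=q$ (choose $x$ aligned with a Jordan frame of $b:=T^*(e)$), so that $\langle x,b\rangle=\langle\lambda(x),\lambda(b)\rangle$ for all $x$, i.e.\ $b$ strongly operator commutes with every $x$, and then Remark 3 forces $b\in\R e$, hence $q=t\one$.

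If you want to stay with your idempotent-based plan, it can be repaired along the same lines, but you must actually use the trace equality at rank-$k$ idempotents: for $c\in\Ik$ one has $\lambda(c)=\one_k$, so $(i)$ gives $\langle c,T^*(e)\rangle=\tr(T(c))=q_1+\cdots+q_k$ for \emph{every} $c\in\Ik$, i.e.\ the linear functional $c\mapsto\langle c,T^*(e)\rangle$ is constant on $\Ik$. For $k=1$ its maximum and minimum over primitive idempotents are $\lambda_1(T^*(e))$ and $\lambda_n(T^*(e))$ (the variational principle (\ref{variational principle}) applied to $\pm T^*(e)$), so these coincide and $T^*(e)=te$; then $q_1+\cdots+q_k=\langle c,te\rangle=tk$ for all $k$ yields $q=t\one$. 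As written, however, the proposal contains neither this argument nor the paper's, so the crucial step of the theorem is missing.
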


\begin{proof}
$(i)\Rightarrow (ii)$:
We assume that $q$ in $(i)$ is given by $q=(q_1,q_2,\ldots, q_n)$. We fix a  Jordan frame $\{f_1,f_2,\ldots, f_n\}$ in $\V$ and let  
$a:=\sum_{i=1}^{n} q_if_i$ so that $q=\lambda(a)$. Then $(i)$ reads
$$\lambda(T(x))\prec \lambda(a)*\lambda(x)\,\, \mbox{for all}\,\, x\in \V.$$
This implies that  $\sum_{i=1}^{n}\lambda_i(T(x))=\sum_{i=1}^{n}\lambda_i(a)\lambda_i(x)$ for all $x$. Since $\V$ carries the trace inner product,
$\sum_{i=1}^{n}\lambda_i(T(x))=\langle T(x),e\rangle=\langle x,T^*(e)\rangle$ and so
$$\langle x, T^*(e)\rangle=\langle \lambda(x),\lambda(a)\rangle.$$
Let $b:=T^*(e)$ so that
\begin{equation}\label{first equality}
\langle x, b\rangle=\langle \lambda(x),\lambda(a)\rangle\,\,\mbox{for all}\,\,x\in \V.
\end{equation}
We claim that 
\begin{equation}\label{second equality}
\langle \lambda(x),\lambda(b)\rangle=\langle \lambda(x),\lambda(a)\rangle\,\,\mbox{for all}\,\,x\in \V.
\end{equation}
To see this, fix any $x\in \V$ and consider the spectral decomposition  $b=\sum_{i=1}^{n}\lambda_i(b)\,e_i$, where $\{e_1,e_2,\ldots, e_n\}$ is a   Jordan frame. 
Corresponding to this Jordan frame and the given $x$, define 
$$y:=\sum_{i=1}^{n}\lambda_i(x)\,e_i.$$
 Then, applying (\ref{first equality}) to $y$, we have
$$\langle y,b\rangle =\langle \lambda(y),\lambda(a)\rangle.$$ As $\lambda(x)=\lambda(y)$ and
$\langle y,b\rangle =\sum_{i=1}^{n} \lambda_i(x)\,\lambda_i(b)= \langle \lambda(x),\lambda(b)\rangle$, we see that
$$\langle \lambda(x),\lambda(b)\rangle=\langle \lambda(x),\lambda(a)\rangle.$$ This proves our claim. By specializing $\lambda(x)$ in (\ref{second equality}) (for example, to $\one_k$ for any index $k$, $1\leq k\leq n$),
we get $\lambda(b)=\lambda(a)$. But then, (\ref{first equality}) leads to  
$$\langle x, b\rangle=\langle \lambda(x),\lambda(b)\rangle\,\,\mbox{for all}\,\,x\in \V.$$
This means that every $x$ in $\V$ `strongly operator commutes' \cite{gowda-ftvn} with $b$. As shown in the Remark below, this can happen if and only if $b$ is a
scalar multiple of $e$. Since $\lambda(b)=\lambda(a)$, $a$ must also be a scalar multiple of $e$ (this can be seen via the spectral decomposition). 
Let $a=t\,e$ for some $t\in \R$ so that $q=t\one$ and 
\begin{equation}\label{multiple of ds}
\lambda(T(x))\prec t\one*\lambda(x)\,\, \mbox{for all}\,\, x\in \V.
\end{equation}
 If $t=0$, then $\lambda(T(x))\prec 0$ for all $x$. Hence, $T=0$, that is, $T$ is a multiple of the Identity map (which is doubly stochastic).   When $t$ is nonzero, we complete the proof by showing that $D:=\frac{1}{t}T$ is doubly stochastic. $t> 0$,
because of (\ref{multiple of ds}), $\lambda(D(x))\prec \one *\lambda(x)$ for all $x$; so $D$ is  doubly stochastic. When $t<0$, say $t=-1$, 
$\lambda(-1\,T(x))=\lambda(T(-x))\prec q*\lambda(-x)$ implies
$$\lambda(D(x))=\lambda(-1\,T(x))\prec \lambda(T(-x))\prec (-1\,\one*\lambda(-x))^\downarrow=\lambda(x),$$
that is,  
$\lambda(D(x))\prec \lambda(x)$ for all $x\in \V$. Now, $T=tD$ says that  $T$ is a scalar multiple of $D$.\\
$(ii)\Rightarrow (i)$. Suppose  $T=tD$, where $t\in \R$ and $D$ is a doubly stochastic map. Since
$\lambda(D(x))\prec \one *\lambda(x)$ for all $x$, by scaling,
$\lambda(T(x))\prec q*\lambda(x)$ for all $x$. (This scaling is obvious when $t\geq 0$. When $t<0$, say $t=-1$,
$\lambda(T(x))=\lambda(-D(x))=\lambda(D(-x))\prec \lambda(-x)=(-\one*\lambda(x))^\downarrow$.)
Putting $q=t\one $, we see that $\lambda(T(x))\prec q*\lambda(x)$ for all $x\in \V$.
\end{proof}

\noindent{\bf Remark 3.} We show that if $b$ {\it strongly operator commutes} \cite{gowda-ftvn} with every $x\in \V$, that is, if $\langle x,b\rangle=\langle \lambda(x),\lambda(b)\rangle$ for all $x\in \V$, then $b$ is a scalar multiple of $e$. Suppose this condition holds. Then, putting $x=-b$, we have $-||b||^2=\langle -b,b\rangle=\langle \lambda(-b),\lambda(b)\rangle.$ As $\V$ carries the trace inner product, $||\lambda(y)||^2=||y||^2$ for all $y\in \V$. Hence,
$$||\lambda(-b)+\lambda(b)||^2=\langle \lambda(-b)+\lambda(b),\lambda(-b)+\lambda(b)\rangle=||b||^2-2||b||^2+||b||^2=0.$$
This implies that $\lambda(-b)=-\lambda(b)$. As $\lambda(-b)$ has components in the decreasing order and $-\lambda(b)$ has components in the increasing order, we see that $\lambda_1(b)=\lambda_n(b)$. This proves that all components in $\lambda(b)$ are equal. Thus, $b$ is a multiple of $e$.

\section{Properties of $\eta$}
In this section, we describe some properties of the map 
$\eta:T\mapsto \eta(T)$
from ${\cal L}(\V)$ to $\Rnpdown,$
where ${\cal L}(\V)$ denotes the set of all (continuous) linear maps on $\V$. First, some notation. 
For any $x\in \V$, we let   
$$||x||_{\infty}:= ||\lambda(x)||_\infty=\underset{1\leq i\leq n}{\max}\, |\lambda_i(x)|=\lambda_1(|x|)$$ and 
for $T\in {\cal L}(\V)$,  
$$||T||_{\infty}:=\underset{0\neq x\in \V}{\sup} \frac{||T(x)||_\infty}{||x||_\infty}.$$

\gap

\begin{theorem}\label{theorem on eta}
{\it The following statements hold for $T,T_1,T_2\in {\cal L}(\V)$ and $\alpha\in \R$: 
\begin{itemize}
\item [(a)] $||\eta(T)||_\infty= \max\{||T||_\infty,||T^*||_\infty\}$. 
\item [(b)] $\eta(\alpha\,T)=|\alpha|\,\eta(T)$.
\item [(c)] $\eta(T_1T_2)\wprec \eta(T_1)*\eta(T_2)$.
\item [(d)] $\eta(T_1+T_2)\wprec \eta(T_1)+\eta(T_2)$.
\item [(e)] $\eta$ is continuous.
\item [(f)] $\eta$ is  `isotonic':  If $T_1(x)\prec T_2(x)$ for all $x\in \V$, then $\eta(T_1)\wprec \eta(T_2)$. 
\end{itemize}
}
\end{theorem}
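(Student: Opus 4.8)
The plan is to prove each of the six items in order, exploiting at every step the dual characterization $\eta(T)=\wsup(S)=\winf(Q)$ established in Lemma \ref{lemma2}, together with the defining inequality \eqref{main inequality2} and the equivalences of Lemma \ref{lemma1}. For item (a), I would argue two inequalities. For ``$\leq$'': taking $k=1$ in \eqref{main inequality2} gives $\lambda_1(|T(x)|)\le \eta_1(T)\,\lambda_1(|x|)$, i.e. $\|T(x)\|_\infty\le \|\eta(T)\|_\infty\,\|x\|_\infty$, so $\|T\|_\infty\le\|\eta(T)\|_\infty$; applying Lemma \ref{lemma1}(iv) the same reasoning gives $\|T^*\|_\infty\le\|\eta(T)\|_\infty$, hence $\max\{\|T\|_\infty,\|T^*\|_\infty\}\le\|\eta(T)\|_\infty$. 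For the reverse, set $m:=\max\{\|T\|_\infty,\|T^*\|_\infty\}$ and check that $q:=m\,\one$ lies in $Q$: for any $c\circ\veps\in\ce$ one has $\|c\circ\veps\|_\infty=\lambda_1(|c\circ\veps|)\le 1$, so $\|T(c\circ\veps)\|_\infty\le m$, and more generally $S_l(|T(c\circ\veps)|)\le l\cdot\|T(c\circ\veps)\|_\infty$ must be bounded against $S_{\min\{k,l\}}(m\one)$; the cleanest route is to recycle the argument of Lemma \ref{lemma2}(ii) with $q=m\one$, using $\|T^*(c'\circ\veps')\|_\infty\le m$ in the $l>k$ case. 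Once $m\one\in Q$, $\eta(T)=\winf(Q)\wprec m\one$, whence $\|\eta(T)\|_\infty\le m$.

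Item (b) is immediate from the definition of $Q$: replacing $T$ by $\alpha T$ multiplies $\lambda(|T(x)|)$ by $|\alpha|$, so $Q(\alpha T)=|\alpha|\,Q(T)$ (with the convention that for $\alpha=0$ the inequality forces nothing and $\eta=0$), and $\winf$ scales accordingly by \eqref{bhatia problem}. For item (c), I would chain the defining inequalities: $\lambda(|T_1T_2(x)|)\wprec \eta(T_1)*\lambda(|T_2(x)|)\wprec \eta(T_1)*\big(\eta(T_2)*\lambda(|x|)\big)=\big(\eta(T_1)*\eta(T_2)\big)*\lambda(|x|)$, where the second step uses \eqref{bhatia problem} to multiply the weak-majorization $\lambda(|T_2(x)|)\wprec\eta(T_2)*\lambda(|x|)$ by the nonnegative decreasing vector $\eta(T_1)$. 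Hence $\eta(T_1)*\eta(T_2)\in Q(T_1T_2)$ up to rearrangement; since this product is already in $\Rnpdown$, minimality of $\eta(T_1T_2)=\winf(Q(T_1T_2))$ gives $\eta(T_1T_2)\wprec\eta(T_1)*\eta(T_2)$. For item (d) I would use the triangle inequality for $S_k$ in $\V$, namely $S_k(|T_1(x)+T_2(x)|)\le S_k(|T_1(x)|)+S_k(|T_2(x)|)$ (subadditivity of $x\mapsto S_k(|x|)$, which follows from the variational principle \eqref{variational principle} and \eqref{ftvn-hlp}), combined with $S_k(|T_i(x)|)\le S_k(\eta(T_i)*\lambda(|x|))$ and the elementary fact that $S_k(u)+S_k(v)\ge S_k(u+v)$ coordinatewise on $\Rn$; this shows $\eta(T_1)+\eta(T_2)\in Q(T_1+T_2)$, and minimality finishes it. Item (f) is similar in spirit: from $T_1(x)\prec T_2(x)$, i.e. $\lambda(T_1(x))\prec\lambda(T_2(x))$, applied to $x$ and to $-x$, one extracts control of $\lambda(|T_1(x)|)$ in terms of $\lambda(|T_2(x)|)$ (since $|T_1(x)|$ has eigenvalues $|\lambda_i(T_1(x))|$ and a majorization $u\prec v$ implies $|u|^\downarrow\wprec|v|^\downarrow$, a standard fact); hence any $q\in Q(T_2)$ lies in $Q(T_1)$, so $\eta(T_1)=\winf(Q(T_1))\wprec\winf(Q(T_2))=\eta(T_2)$.

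Finally, item (e): continuity of $\eta$ should follow from items (a) and (d). From (d) and (b), $\eta(T_1)\wprec\eta(T_2)+\eta(T_1-T_2)$ and symmetrically, so it suffices to show $\eta(T_n)\to 0$ whenever $T_n\to 0$ in operator norm; by (a), $\|\eta(T_n)\|_\infty=\max\{\|T_n\|_\infty,\|T_n^*\|_\infty\}\to 0$ (the spectral norm $\|\cdot\|_\infty$ being equivalent to any norm on the finite-dimensional $\V$, and $T\mapsto T^*$ being continuous), and $\|\eta(T_n)\|_\infty\to0$ forces $\eta(T_n)\to0$ in $\Rn$ since all components are nonnegative and bounded by the first. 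Then a standard argument turning the two-sided weak-majorization bound into a genuine estimate of $\|\eta(T_1)-\eta(T_2)\|$ yields continuity; here one uses that on $\Rnpdown$ the map $q\mapsto(S_1(q),\dots,S_n(q))$ is a linear homeomorphism onto its image and that $u\wprec v$, $v\wprec u+w$ with $w$ small gives $S_k(u)$ close to $S_k(v)$.

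The main obstacle I anticipate is item (e), specifically converting the weak-majorization relations from (d) into a Lipschitz-type bound $\|\eta(T_1)-\eta(T_2)\|\lesssim \|\eta(T_1-T_2)\|$ in a clean way; the subtlety is that weak-majorization is only a preorder and the partial-sum coordinates $S_k$ must be handled carefully (one does not have $\eta(T_1)\wprec\eta(T_2)$ and $\eta(T_2)\wprec\eta(T_1)$, only the perturbed versions). A secondary technical point is the precise justification of the subadditivity $S_k(|x+y|)\le S_k(|x|)+S_k(|y|)$ in $\V$ needed for (d); this is where the variational principle \eqref{variational principle} together with \eqref{ftvn-hlp} does the real work, since $S_k(|x|)=\max_{c\in\I^{(k)}}\langle |x|,c\rangle=\max_{c\in\I^{(k)},\,\veps\in\E}\langle x,c\circ\veps\rangle$ exhibits $S_k(|\cdot|)$ as a supremum of linear functionals, hence convex and subadditive.
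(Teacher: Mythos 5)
Your plan is correct, and items (b), (c), (f) and the first inequality in (a) are essentially the paper's own argument; where you differ, your route also works, so let me just compare. For the reverse inequality in (a), the paper sets $\theta_k:=\sup_{c\circ\veps\in\ce}\lambda_k(|T(c\circ\veps)|)$ and $\theta^*_k$ analogously and shows $\eta(T)=\wsup(S)\wprec\max\{\theta,\theta^*\}$, whereas you verify directly that $m\one\in Q$ for $m=\max\{\|T\|_\infty,\|T^*\|_\infty\}$ by rerunning the two-case idempotent argument of Lemma \ref{lemma2}(ii); that check does go through (for $l\leq k$ one gets $S_l(|T(c\circ\veps)|)\leq l\,\|T\|_\infty\leq l\,m$ since $\|c\circ\veps\|_\infty\leq 1$, and for $l>k$ passing to $T^*$ gives the bound $k\,m$), and the two arguments are equivalent in substance. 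For (d), the paper invokes the Lidskii-type inequality $\lambda(x+y)\prec\lambda(x)+\lambda(y)$ (citing Moldovan/Gurvits) together with (\ref{Lemma 4.1}) to get $\lambda(|a+b|)\wprec\lambda(|a|)+\lambda(|b|)$; you instead derive subadditivity of $x\mapsto S_k(|x|)$ from the representation $S_k(|x|)=\max_{c\in\Ik,\,\veps\in\E}\langle x, c\circ\veps\rangle$, which indeed follows from (\ref{variational principle}), (\ref{ftvn-hlp}), (\ref{Lemma 4.1}) and (\ref{bhatia problem}); this buys independence from the external Lidskii result at the cost of a short verification of the max formula, and then $S_k(u)+S_k(v)=S_k(u+v)$ for decreasing vectors finishes exactly as you say. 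For (e), the paper is slicker: by (b) and (d) each $T\mapsto S_k(\eta(T))$ is positively homogeneous and subadditive, hence convex and therefore continuous on the finite-dimensional ${\cal L}(\V)$, and the components of $\eta$ are differences $S_k-S_{k-1}$; your route is also fine, and the ``main obstacle'' you anticipate is not one, since (d) and (b) give $|S_k(\eta(T_1))-S_k(\eta(T_2))|\leq S_k(\eta(T_1-T_2))\leq n\,\|\eta(T_1-T_2)\|_\infty$, which tends to $0$ by (a) as $T_2\to T_1$ (norms on ${\cal L}(\V)$ being equivalent and $T\mapsto T^*$ continuous), so each $S_k\circ\eta$ is in fact Lipschitz and no delicate conversion from weak majorization to norm estimates is needed.
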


\begin{proof}
$(a)$ Fix $T$. From the pointwise inequality, $\lambda(|T(x)|)\underset{w}{\prec} \eta(T)*\lambda(|x|)$, we see that 
$$||T(x)||_\infty=\lambda_1(|T(x)|)\leq  (\eta(T))_1\,\lambda_1(|x|)=||\eta(T)||_\infty\,||x||_\infty.$$
It follows that $||T||_{\infty}\leq ||\eta(T)||_\infty.$ By Lemma \ref{lemma1}, $\lambda(|T^*(x)|)\underset{w}{\prec} \eta(T)*\lambda(|x|)$.
This yields, $||T^*||_{\infty}\leq ||\eta(T)||_\infty.$ Hence, $$\max\{||T||_\infty,||T^*||_\infty\}\leq ||\eta(T)||_\infty.$$
 We now prove the reverse inequality.
Let, for $1\leq k\leq n$,
$$\theta_k:=\underset{c\circ \veps\in \ce}{\sup}\lambda_k(|T(c\circ \veps)|)\quad \mbox{and} \quad \theta^*_k:=\underset{c\circ \veps\in \ce}{\sup}\lambda_k(|T^*(c\circ \veps)|).$$
Let $\theta:=(\theta_1,\theta_2,\ldots,\theta_n)$ and  $\theta^*:=(\theta^*_1,\theta^*_2,\ldots,\theta^*_n)$. As the components in any $\lambda(x)$ are decreasing, we see that
$\theta,\theta^*\in \Rnpdown.$ Let 
$$\bar{q}:=\max\{\theta,\theta^*\}.$$
For the given $T$, we define the  sets $Q$, $\Lambda$, $\Lambda^*$ and $S$ as in (\ref{Q})-(\ref{S}). 
Suppose $s\in \Lambda$ so that $s=\lambda(|T(c\circ \veps)|)$ for some $c\circ \veps\in \ce$. Then, for any $k$, $1\leq k\leq n$,
$$S_k(s)=\sum_{i=1}^{k}\lambda_i(|T(c\circ \veps)|)\leq \sum_{i=1}^{k}\theta_i\leq S_k(\bar{q}).$$
We have a similar statement when $s\in \Lambda^*$. Hence, $s\wprec \bar{q}$ for all $s\in S$. This implies that $\wsup(S)\wprec\bar{q}$. However, $\wsup(S)=\winf(Q)=\eta(T)$ and so, 
$\eta(T)\wprec \bar{q}$. Thus, 
$$\eta(T)\wprec \max\{\theta,\theta^*\}.$$
This implies that 
$$||\eta(T)||_\infty=(\eta(T))_1\leq \max\{ \theta_1,\theta^*_1\}.$$
However, 
$$\theta_1=\underset{c\circ \veps\in \ce}{\sup}\lambda_1(|T(c\circ \veps)|)= \underset{c\circ \veps\in \ce}{\sup} ||T(c\circ \veps)||_\infty \leq ||T||_\infty,$$
where the  inequality is due to   
$$||T(c\circ \veps)||_\infty \leq ||T||_\infty\,||c\circ \veps||_\infty= ||T||_\infty\,\lambda_1(|c\circ \veps|)\leq ||T||_\infty\,\lambda_1(|c|)=||T||_\infty.$$ 
Similarly, $\theta^*_1\leq ||T^*||_\infty$. Hence, 
$$||\eta(T)||_\infty\leq \max\{||T||_\infty,||T^*||_\infty\}.$$
Since the reverse inequality has already been proved, we have Item $(a)$.\\
$(b)$ This is easy to see from the uniqueness part in the main theorem.\\
$(c)$ Let $T_1,T_2\in {\cal L}(\V).$ Then,
$$\lambda(|T_1T_2(x)|)\underset{w}{\prec} \eta(T_1)*\lambda(|T_2(x)|)\wprec  \eta(T_1)*\eta(T_2)*\lambda(|x|) \,\, \mbox{for all}\,\,x\in \V,$$
where we have used (\ref{bhatia problem}) in the second inequality. From the main theorem, we have $\eta(T_1T_2)\wprec \eta(T_1)*\eta(T_2)$.\\
$(d)$ Suppose $a,b\in \V$. Writing $|a+b|=(a+b)\circ \veps$  for some $\veps\in \E$, we have 
\begin{align*}
\lambda(|a+b|)& = \lambda\Big ( (a+b)\circ \veps\Big)\\
              &= \lambda\Big ( a\circ \veps+ b\circ \veps\Big )\\
              &\prec \lambda( a\circ \veps)+\lambda( b\circ \veps)\\
              &\leq \lambda( |a\circ \veps|)+\lambda( |b\circ \veps|)\\
              & \wprec \lambda( |a|)+\lambda( |b|),
\end{align*}
where the first inequality follows from Lidskii type inequality $\lambda(x+y)\prec \lambda(x)+\lambda(y)$ in $\V$ (which easily follows from a result on simple Euclidean Jordan algebras \cite{moldovan} or from a general result on hyperbolic polynomials \cite{gurvits}) and the last inequality follows from (\ref{Lemma 4.1}).
Now, we put $a=T_1(x)$, $b=T_2(x)$ and use the main theorem to get, for any $x\in \V$,
$$\lambda\Big (|T_1(x)+T_2(x)|\Big )\wprec \Big (\eta(T_1)+\eta(T_2)\Big )*\lambda(|x|).$$ 
This gives the stated conclusion in $(d)$.\\
$(e)$ From Items $(b)$ and $(d)$, we see that for each index $k$, the function 
$T\mapsto S_k(\eta(T))$ is positively homogeneous and subadditive, hence convex. As convex functions (with domain  ${\cal L}(\V)$), these functions are continuous. Hence the $k$th component of $\eta(T)$, being the difference of $S_k(\eta(T))$ and $S_{k-1}(\eta(T))$, is also continuous in $T$. This means that $\eta(T)$ is continuous in  $T$.
\\
$(f)$ Suppose $T_1(x)\prec T_2(x)$ for all $x\in \V$. Then, by definition $\lambda(T_1(x))\prec \lambda(T_2(x))$ for all $x\in \V$. Hence, 
$|\lambda(T_1(x))|\wprec |\lambda(T_2(x))|$, or equivalently, $\lambda(|T_1(x)|)\wprec \lambda(|T_2(x)|)$ for all $x$. Then, the pointwise inequality
$$\lambda(|T_1(x)|)\wprec |\lambda(|T_2(x)|)\wprec \eta(T_2)*\lambda(|x|)$$
implies that $\eta(T_1)\wprec \eta(T_2)$.
 \end{proof}

\gap

\noindent{\bf Remark 4.} In this remark, it is convenient to let $p,r,s$ denote real numbers. 
For $r\in [1,\infty]$ and $u\in \Rn$, $||u||_r$ is the usual $r$-norm of $u$ in $\Rn$. For any $x\in \V$, we define the
 corresponding {\it spectral norm}
$$||x||_r := ||\lambda(x)||_r,$$
which is $\big [\sum_{i=1}^{n} |\lambda_i(x)|^r\big ]^{1/r}$ when $1\leq r<\infty$ and
$\mbox{max}_{1\leq i\leq n} |\lambda_i(x)|$ when $r=\infty$. 
\\
Given $r,s\in [1,\infty]$ and $T\in {\cal L}(\V)$, we define the norm of $T$ from $(\V, ||\cdot||_r)$ to $(\V, ||\cdot||_s)$ by
$$||T||_{r\rightarrow s}:=\underset{0\neq x\in \V}{\sup} \frac{||T(x)||_s}{||x||_r}.$$
Now, suppose $p,r, s\in [1,\infty]$ with $\frac{1}{p}=\frac{1}{r}+\frac{1}{s}$. Then, the following inequality holds for any $T\in {\cal L}(\V)$ and $x\in \V$:
\begin{equation}\label{norm inequality for T}
||T(x)||_p\leq ||\eta(T)||_r\,||x||_s.
\end{equation}
 To see this, we follow the argument given in Theorem 5.1 of \cite{tao-jeong-gowda}. Starting from the inequality $\lambda(|T(x)|)\underset{w}{\prec} \eta(T)*\lambda(|x|)$, 
(\ref{norm inequality for T}) is easily seen when $p=\infty$ and $p=1$. For $1<p<\infty$, we use the fact that the function $\phi:t\mapsto t^p$ is an increasing convex function on $[0,\infty)$ to get (\cite{bhatia}, Exercise II.3.2)
$$||T(x)||_p\leq ||\eta(T)*\lambda(|x|)||_p.$$
An application of (classical) generalized H\"{o}lder's inequality gives (\ref{norm inequality for T}). Additionally, as  in Theorem 5.1 of \cite{tao-jeong-gowda}, we can prove the following:
$$||T||_{r\rightarrow s}\leq \left \{
\begin{array}{lll}
||\eta(T)||_\infty & if & r\leq s,\\
||\eta(T)||_{\frac{rs}{r-s}} & if &  s <r.
\end{array}
\right .
$$
To see a special case, suppose $T$ is positive and self-adjoint. In this case, $\eta(T)=\lambda(T(e))$ and so,
$$||T||_{r\rightarrow s}\leq \left \{
\begin{array}{lll}
||T(e)||_\infty & if & r\leq s,\\
||T(e)||_{\frac{rs}{r-s}} & if &  s <r.
\end{array}
\right .
$$
In some special cases, equality holds, see  e.g., Theorem 5.1 in \cite{tao-jeong-gowda}.
\section{Examples}

In this section, we present some examples. 
\\

\noindent{\bf Example 1} ({\it Lyapunov transformation $L_a$ and the quadratic representation $P_a$})
Recall that for any $a\in \V$, $L_a$ is defined by $L_a(x)=a\circ x$. As mentioned in the Introduction, we have $\lambda(|L_a(x)|)\wprec \lambda(|a|)*\lambda(|x|)$ for all $x\in \V$. (Note that this result does not come from any of our results above.) 
By our main theorem, we see that $\eta(L_a)\wprec \lambda(|a|).$  However, by putting $x=e$ in the inequality $\lambda(|L_a(x)|)\wprec \eta(L_a)*\lambda(|x|)$ we see that 
$\lambda(|a|)\wprec \eta(L_a)$. Hence, $\eta(L_a)=\lambda(|a|)$.  Now, $P_a$, defined by $P_a(x):=2\,a\circ (a\circ x)-a^2\circ x$,  is positive and self-adjoint; hence, from Corollary \ref{corollary for positive maps}, we have $\eta(P_a)=\lambda(P_a(e))=\lambda(a^2)$.\\

\noindent{\bf Example 2} ({\it Doubly stochastic maps}) \\
Suppose  $T$ is doubly stochastic so that $T$ is positive and $T(e)=T^*(e)=e$. In this case, from Corollary \ref{corollary for positive maps},  $\lambda(|T(x)|)\wprec \lambda(|x|)$ for all $x\in \V$ and $\eta(T)=\one$. As noted earlier, $\lambda(T(x))\prec \lambda(x)$ for all $x\in \V$. 
These apply when $T$ is a convex combination of algebra automorphisms \cite{gowda-positive and ds}. 
\\

\noindent{\bf Example 3} ({\it Doubly substochastic maps})\\
When $T$ is doubly substochastic, $T$ is positive with  $T(e)\leq e$ and  $T^*(e)\leq e$. So, 
$\eta(T)=\lambda(T(e))\underset{w}{\vee}\lambda(T^*(e))\wprec \lambda(e)= \one$ and 
$\lambda(|T(x)|)\underset{w}{\prec} \lambda(|x|)\,\,\mbox{for all}\,\, x\in \V,$
or equivalently, $\lambda(T(x))\underset{w}{\prec} \lambda(x)\,\,\mbox{for all}\,\, x\geq 0.$
\\

\noindent{\bf Example 4} ({\it Cone automorphisms})\\
Suppose $\V$ is a {\it simple} Euclidean Jordan algebra and $T\in \overline{\mbox{Aut}(\V_+)}$ (the closure of $\mbox{Aut}(\V_+)$ in ${\cal L}(\V)$). We claim that 
$$\eta(T)=\lambda(T(e)).$$
To see this, first suppose $T\in \mbox{Aut}(\V_+)$. Because $\V$ is assumed to be simple, we can write $ T=P_{a}\,\Phi$, where $\Phi$ is an algebra automorphism and $P_{a}$ is the quadratic representation of (some) $a>0$, see \cite{faraut-koranyi}, Page 56.  So,
by Remark  2, $\eta(T)=\lambda(T(e))$.  
Now the result for any $T\in \overline{\mbox{Aut}(\V_+)}$ follows from the  continuity of $\eta$ and $\lambda$. 
In the same setting, consider a linear map $S:\V\rightarrow \V$ which is a sum of a finite number of maps in $\overline{\mbox{Aut}(\V_+)}$.
Then, $S$ is a positive map and Corollary \ref{corollary for positive maps} can be applied. To illustrate these results, let $\V=\Hn$. Then, any $T\in \overline{\mbox{Aut}(\Hn_+)}$ is of the form $T(X)=AXA^*$ for some $n\times n$ complex square matrix $A$. In this setting, 
$$\eta(T)=\lambda(T(I))=\lambda(AA^*).$$
Now consider a {\it completely positive map} $S$ on $\Hn$, which is, by definition, a finite sum of the form  
$S(X):=\sum_{k=1}^{N}A_kXA_k^*$ with $A_k\in \mn$ for all $k$.  Letting $C:=S(I):=\sum_{k=1}^{N}A_kA_k^*$ and $D:=S^*(I):=\sum_{k=1}^{N}A^*_kA_k$, we have
$$\eta(S)\wprec \lambda(C)\underset{w}{\vee} \lambda(D)\wprec \max\{\lambda(C),\lambda(D)\}.$$

\noindent{\bf Example 5} ({\it ${\bf Z}$ and Lyapunov-like transformations})\\
We say that a  linear map $L:\V\rightarrow \V$ is a ${\bf Z}$-transformation \cite{gowda-tao-z} if  
$$\left [\,x,y\geq 0\,\,\mbox{and}\,\,\langle x,y\rangle=0\,\right ]\Rightarrow \langle L(x),y\rangle \leq 0.$$ 
It is said to be {\it Lyapunov-like} if the inequality on the right becomes an equality.\\
Such maps appear in dynamical systems theory. If a  ${\bf Z}$-transformation $L$ is also positive stable (meaning that  all eigenvalues of $L$ have positive real parts), 
then it is known that $L^{-1}$ is a positive map on $\V$ \cite{gowda-tao-z}. 
In this case, Corollary \ref{corollary for positive maps} is applicable to $L^{-1}$. To see an important special case, suppose $A$ is an $n\times n$  positive stable complex matrix. Then, $L_A$, defined on $\Hn$ by $L_A(X):=AX+XA^*$ is Lyapunov-like and positive stable. Let
$$C:=L^{-1}_{A}(I)\quad\mbox{and}\quad D:=L^{-1}_{A^*}(I).$$
Then, 
$$\eta(L^{-1}_A)=\lambda(C)\underset{w}{\vee}\lambda(D)\wprec \max\{\lambda(C),\lambda(D)\}.$$
We note that for any $X\in \Hn$, $L^{-1}_{A}(X)$ has an integral representation
$$L^{-1}(X)=\int_{0}^{\infty} e^{tA}Xe^{-tA^*}dt,$$
with a similar representation for $L^{-1}_{A^*}(X)$. These will give us integral representations for $C$ and $D$, but it is unclear how to represent $\eta(T)$ either in an integral form or in a closed form.   
\\

\noindent{\bf Example 6} ({\it L\"{o}wner maps})\\ 
Given a function $\phi:\R\rightarrow \R$, consider  the corresponding L\"{o}wner  map  $\phi$ defined on $\V$ (see Section 2). 
Motivated by the inequality (\ref{main inequality2}), we ask if the absolute value function can be replaced by $\phi$.
Keeping close to the properties of the absolute value function, we say that 
a function $\phi : \R \to \R$ is {\it sublinear} if
\begin{enumerate}
\item $\phi(\mu t) = \mu\phi(t)$ for all $\mu \geq 0$ and $t \in \R$;
\item  $\phi(t+s) \leq \phi(t) + \phi(s)$ for all $t,s\in \R$.
\end{enumerate}

It is easy to see that sublinear functions on $\R$ are of the form $\phi(t)=\alpha\,t$ for $t\geq 0$ and $\phi(t)=\beta\,t$ for $t\leq 0$, where (constants) $\alpha,\beta\in \R$ satisfy
$\beta\leq \alpha$. Among these, we consider ones that are nonnegative (that is, $\phi(t)\geq 0$ for all $t$). Examples include 
$$\phi(t)=|t|, \,\,\phi(t)=\max\{t,0\},\,\,\mbox{and}\,\,\phi(t)= \max\{-t,0\}.$$  
Now {\it suppose $\phi$ is sublinear and nonnegative,} and consider the corresponding L\"{o}wner  map. Then for any positive linear map $T$ we have 
$$\lambda\Big (\phi(T(x))\Big )\underset{w}{\prec}\lambda(T(\phi(x))\wprec \eta(T)*\lambda(\phi(x))\,\, \mbox{for all}\,\,x\in \V.$$
Here, the first inequality follows from Lemma 3.6 in \cite{tao-jeong-gowda} and the second inequality follows from 
Remark 1, Item $(ii)$. 
\\

\noindent{\bf Example 7}
For any $a,\, b \in \V$, consider the map 
        \[ P_{a, b}: = L_a L_b + L_b L_a - L_{a \circ b}. \]
        Clearly, $P_{a,a}=P_a$. Now, for any $a > 0$, $0 \leq t \leq 1$, it has been shown in \cite{gowda-schur} that
        $$ P_{\sqrt{a}}(x) \prec P_{a^t, a^{1-t}}(x) \prec L_a(x)\,\,\mbox{for all}\,\,x\in \V.$$
        Hence, by the `isotonicity' property  of $\eta$ (Item $(f)$ in Theorem \ref{theorem on eta}), we have
        \[ \eta(P_{\sqrt{a}}) \underset{w}{\prec} \eta(P_{a^t, a^{1-t}}) \underset{w}{\prec} \eta(L_a). \]
        Since $\eta(P_{\sqrt{a}}) =  \lambda(a)=\eta(L_a)$ (note $a>0$), the above inequality reduces to $\lambda(a) \underset{w}{\prec} \eta(P_{a^t, a^{1-t}}) \underset{w}{\prec} \lambda(a)$. It follows that 
$$\eta(P_{a^t, a^{1-t}}) = \lambda(a)\quad (a > 0,\,0 \leq t \leq 1).$$ 
It would be interesting to compute $\eta(P_{a, b})$ for general $a,\, b \in \V$.
\\

\noindent{\bf Example 8} ({\it Schur product induced maps})\\
Consider a fixed Jordan frame $\{e_1,e_2,\ldots, e_n\}$ in $\V$. This induces the Peirce decomposition (\cite{faraut-koranyi}, Theorem IV.2.1) $\V=\sum_{1\leq i\leq j\leq n} \V_{ij}$, and for any $x\in \V$,
$x=\sum_{i\leq j} x_{ij}$, where $x_{ij}\in \V_{ij}$.
Now, for any given $A=[a_{ij}]\in \Sn$, we define the Schur product $A\bullet x:=\sum_{i\leq j} a_{ij}x_{ij}.$ 
Properties of the Schur product and the induced transformation $D_A:x\mapsto A\bullet x$ are studied in \cite{gowda-sznajder-tao}.  The Lyapunov transformation $L_a$ and the quadratic representation $P_a$ are special cases. Further special cases are described below. 
\\
$(i)$ Suppose $A\in \Sn$ is positive semidefinite and consider the map $D_A:\V\rightarrow \V$ defined by $D_A(x):=A\bullet x$.
Then, $D_A$ is a self-adjoint positive linear map. So, 
$$\eta(D_A)=\lambda(D_A(e))=(\mbox{diag}\,A)^\downarrow.$$
In what follows, we provide an estimate for $\eta(D_A)$, where $A$ is not necessarily positive semidefinite. We write $A = A^{+} - A^{-}$, where $A^{+}$ and $A^{-}$ are positive semidefinite. 
By Theorem 4.1, we have 
        \begin{align*}
                \eta(D_A) & = \eta(D_{A^{+}} - D_{A^{-}}) \\
                & \underset{w}{\prec} \eta(D_{A^{+}}) + \eta(D_{A^{-}}) \\
                & = (\diag(A^{+}))^\downarrow  + (\diag(A^{-}))^\downarrow.
        \end{align*}
On the other hand, $\abs{\diag(A)}^\downarrow=\lambda(|D_A(e)|)\wprec \eta(D_A)*\lambda(e)=\eta(D_A).$
        Hence, we have 
$$\abs{\diag(A)}^\downarrow \underset{w}{\prec} \eta(D_A) \underset{w}{\prec} (\diag(A^{+}))^\downarrow + (\diag(A^{-}))^\downarrow$$ for any symmetric matrix $A$.
\\

$(ii)$ Now suppose $A=[a_{ij}], B=[b_{ij}]\in \Sn$ with $b_{ij}\neq 0$ for all $i,j$. Define the matrix $C:=\big [\frac{a_{ij}}{b_{ij}}\big ]\in \Sn$. Then, 
$A\bullet x=C\bullet (B\bullet x)=D_C(B\bullet x).$ Hence,
$$\lambda(|A\bullet x|)\wprec \eta(D_C)*\lambda(|B\bullet x|)\quad (x\in \V).$$
In particular, when $C$ is positive semidefinite (in which case, $D_C$ is a self-adjoint, positive map),
$$\lambda(|A\bullet x|)\wprec (\mbox{diag}\,C)^\downarrow *\lambda(|B\bullet x|)\quad (x\in \V).$$
$(iii)$ Suppose $\lambda(A\bullet x)\prec \lambda(B\bullet x)$ for all $x\in \V$. Pointwise majorization results of this type have been recently studied in \cite{gowda-schur}.
Thanks to the isotonicity of $\eta$, the pointwise inequality $\lambda(A\bullet x)\prec \lambda(B\bullet x)$ implies that $\eta(D_A)\wprec \eta(D_B)$. Example 7 above is an illustration of this.

\section{Some open problems}
Motivated by our results and examples, we raise some open problems.\\

\noindent{\bf Problem 1.} Consider a linear map $T:\Hn\rightarrow \Hn$. This can be extended to a (complex) linear map $\wtt:\mn\rightarrow \mn$ by 
$$\wtt(X):=T(A)+i\,T(B),$$
where $A:=\frac{X+X^*}{2}$ and $B:=\frac{X-X^*}{2i}$ are in $\Hn$. Then, we have $\eta(T)$ coming from Theorem \ref{main theorem} and $\eta(\wtt)$ 
coming from the result of Bapat (mentioned in the Introduction). To emphasize the algebras involved and to differentiate them, let us write $\eta(T,\Hn)$ and $\eta(\wtt,\mn)$. Then,
$s\big(\wtt(X)\big)\wprec \eta\big(\wtt,\mn\big)*s(X)$ for all $X\in \mn$ and $\lambda(|T(X)|)\wprec \eta(T,\Hn)*\lambda(|X|)$ for all $X\in \Hn$. From Theorem \ref{main theorem}  we see that 
$$\eta(T,\Hn)\wprec \eta(\wtt,\mn).$$
We now ask if (or under what conditions) the equality holds in the above. 
\\

\noindent{\bf Problem 2.} For a matrix $A\in \mn$, consider the Lyapunov transformation $L_A$ on $\Hn$. Computing the norms of $L_A$ and its inverse (whenever defined) 
relative to spectral $p$-norms is still an open problem 
\cite{feng et al, bhatia-lyapunov}. A related problem could be the description of  $\eta(L_A)$; see Example 1 (for Hermitian $A$).\\

\noindent{\bf Problem 3.} Given $A\in \Sn$ a fixed Jordan frame in $\V$, consider the Schur product induced map $D_A$ (as in Example 8). Is there a description of 
$\eta(D_A)$?


\end{document}